\theoremstyle{plain}
        \newtheorem{theorem}{Theorem}
        \newtheorem*{theorem*}{Theorem}
        \newtheorem*{maintheorem*}{Main Theorem}
        \newtheorem*{conj*}{Conjecture}
        \newtheorem{lemma}[theorem]{Lemma}
        \newtheorem{cor}[theorem]{Corollary}
        \newtheorem{prop}[theorem]{Proposition}
\theoremstyle{definition}
        \newtheorem*{definition*}{Definition}
\theoremstyle{remark}
        \newtheorem*{remark}{Remark}
        \newtheorem*{example}{Example}
        \newtheorem*{claim1}{Claim 1}
        \newtheorem*{claim2}{Claim 2}
\numberwithin{equation}{section}
\def\reminder #1 {{\sf #1}}
\def\hide #1 {}
\long\def\longhide #1 {}
\newcommand\numberthis{\addtocounter{equation}{1}\tag{\theequation}}
\newcommand{\mylabel}[2]{#2\def\@currentlabel{#2}\label{#1}}
\newcommand{\R}{\mathbb{R}}      
\newcommand{\C}{\mathbb{C}}      
\newcommand{\N}{\mathbb{N}}      
\newcommand{\Z}{\mathbb{Z}}      
\newcommand{\CDach}{\widehat{\mathbb{C}}}
\newcommand{\D}{\mathbb{D}}      
\newcommand{\GC}{\mathcal{G}}
\newcommand{\sG}{\mathsf{G}}
\newcommand{\sT}{\mathsf{T}}
\renewcommand{\:}{\colon}
\newcommand{\Sp}{S^2}
\newcommand{\fix}{\operatorname{Fix}}
\newcommand{\inter}{\operatorname{int}}
\newcommand*\interior[1]{#1^{\mathsf{o}}}
\newcommand{\Rat}{\operatorname{Rat}}
\newcommand{\Tisch}{\operatorname{Tisch}}
\newcommand{\Charge}{\operatorname{Charge}}
\begin{document}

\title{Tischler graphs of critically fixed rational maps and their applications}

\author{Mikhail Hlushchanka}
\address{Korteweg-de Vries Instituut voor Wiskunde, Universiteit van Amsterdam,  1090 GE \newline Amsterdam, The Netherlands}
\address{Mathematisch Instituut,  Universiteit Utrecht,
3508 TA Utrecht,  The Netherlands}
\email{mikhail.hlushchanka@gmail.com}

\date{June 15, 2023}

\keywords{Critically fixed rational maps,  combinatorial classification problem,  Tischler graphs,  blow-up surgery, curve attractor.}
\subjclass[2010]{Primary 37F20, 37F10} 

\begin{abstract}
A rational map $f\colon\CDach\to\CDach$ on the Riemann sphere $\CDach$ is called critically fixed if each critical point of $f$ is fixed under $f$. In this article, we study the properties of a combinatorial invariant, called the Tischler graph, associated with such a map.   We show that the Tischler graph of a critically fixed rational map is always connected, establishing a conjecture made by Kevin Pilgrim.  
This result allows us to solve two classical open problems in rational dynamics in the setting of critically fixed rational maps, namely the combinatorial classification problem and the global curve attractor problem.    
In particular,  we prove that there is a canonical one-to-one correspondence between the conjugacy classes of critically fixed rational maps and the isomorphism classes of connected planar embedded graphs. 
\end{abstract}
\maketitle

\section{Introduction}\label{sec:intro}

Fix an integer $d\geq 2$, and let $\Rat_d[\C]$ be the space of all rational maps of degree $d$ with complex coefficients. Each function $f\in \Rat_d[\C]$ may be viewed as a self-map $f\colon\CDach\to\CDach$ of the Riemann sphere $\CDach$. For $n\in\N$, we define $f^n=f \circ f \circ \cdots \circ f$ to be the $n$-fold composition of $f$ with itself, called the \emph{$n$-th iterate of $f$}. The iterates of $f$ yield a holomorphic dynamical system on~$\CDach$. 
One of the main challenges in the field is to describe (and distinguish) dynamically different maps within a particular family of rational maps in combinatorial terms, with the ultimate goal of better understanding the structure of the parameter space $\Rat_d[\C]$. 

For a rational map $f\colon \CDach\to \CDach$, we denote by $C_f$ the set of all critical points of $f$, that is, points $c\in\CDach$ at which $f$ is not locally injective. By the Riemann--Hurwitz formula, $f$ has $2\deg(f)-2$ critical points when counted with multiplicity.  Over 100 years ago, Fatou and Julia established that the global dynamical behavior of $f$ is controlled by the 
orbits of its critical points; see, for instance, \cite{Milnor_Book}.  We denote by $P_f:=\bigcup_{n=1}^{\infty} f^n(C_f)$ the \emph{postcritical set} of $f$. The rational map $f$ is said to be \emph{postcritically-finite} if the cardinality $\#P_f$ is finite,  in other words,
if every critical point of $f$ has a finite orbit under iteration. 
The set of fixed points of $f$ is denoted by $\fix(f)$. From the holomorphic fixed point formula \cite[Lemma 12.1]{Milnor_Book}, $\#\fix(f)=\deg(f)+1$ if counted with multiplicity.

In this article, we study properties of critically fixed rational maps.

\begin{definition*}
A rational map $f\colon  \CDach\to\CDach$ is said to be \emph{critically fixed} if $C_f\subset \fix(f)$,  that is,  if every critical point of $f$ is also a fixed point of $f$.
\end{definition*}

The class of critically fixed rational maps is very special: for every $d\geq 2$ there are only finitely many critically fixed rational maps of degree $d$ up to conformal conjugation (this is a consequence of Thurston's rigidity theorem \cite{DH_Th_char};  see, for instance, \cite[Corollary 3.7]{Census}).  Also, they are obviously postcritically-finite. At the same time, exceptional properties of critically fixed rational maps make it possible to elegantly answer many open dynamical questions for them. For instance, Tischler provided a complete combinatorial classification of critically fixed polynomials in terms of alternating planar embedded trees in  \cite[Theorem 4.2]{Tischler}. Cordwell et al.\ attempted to extend Tischler's considerations to the case of general critically fixed rational maps in \cite{Pilgrim_crit_fixed}. 
However, their work was not complete. In this article, we provide the main missing ingredient, which addresses the connectivity properties of a certain graph naturally associated with every critically fixed rational map.  We also finalize the combinatorial classification and discuss its application to the global curve attractor problem.   
Before giving more details about the main results, we introduce some necessary definitions.

\subsection{Tischler graphs}
Let  $f\colon\CDach\to\CDach$ be a critically fixed rational map and $c\in C_f$ be a critical point of $f$. The \emph{basin of attraction of $c$} is the set $$B_c := \{z \in \CDach : \lim_{n\to\infty} f^n(z) = c\}.$$ 
The connected component of $B_c$ containing the point $c$ is called the \emph{immediate basin} of $c$ and denoted by $\Omega_c$. 
 It follows from 
 \cite[Theorem 9.3]{Milnor_Book} that $\Omega_c$ is a simply connected open set. Moreover, there exists a conformal map $\tau_{c}\colon \D\to\Omega_c$ and a number $d_c\in\N$ such that
\[( \tau_{c} \circ f \circ \tau_c^{-1})(z) = z^{d_c}\]
for all $z$ in the open unit disc $\D:=\{z\in\C: |z|<1\}$. The number $d_c$ equals the local degree of $f$ at~$c$, that is, $d_c -1$ is the multiplicity of the critical point $c$.  Furthermore, the conformal map $\tau_{c}$ extends to a continuous and surjective map $\tau_{c}\colon \overline{\D}\to\overline{\Omega}_c$ between the closures; see \cite[Theorem 17.14 and Lemma 19.3]{Milnor_Book}.

An \emph{internal ray of angle $\theta\in\R$} in the immediate basin $\Omega_c$ is the image of the radial arc $r(\theta) := \{t e^{2\pi i \theta} \: t\in [0,1]\}$ under the 
map $\tau_c$. The point $\tau_c(e^{2\pi i \theta})\in\partial\Omega_c$ is called the \emph{landing point} of the internal ray of angle $\theta$. Note that the internal ray of angle $\theta$ is fixed (set-wise) under $f$ if and only if $\theta\equiv \frac{j}{d_c-1} \, \,(\text{mod } \Z)$ for some $j=0,\dots,d_c-2$.

\begin{definition*}
The \emph{Tischler graph} of a critically fixed rational map $f$ is the planar embedded graph $\Tisch(f)$ whose edge set consists of the fixed internal rays in the immediate basins of all critical points of $f$ and whose vertex set consists of the endpoints of these rays. That is, as a subset of $\CDach$,  $\Tisch(f)$ is the union of all fixed internal rays constructed in the previous paragraph. 
\end{definition*}

Now we are ready to formulate one of the main results of this article.

\begin{theorem}\label{thm:Tisch-connected}
Let $f\colon\CDach\to\CDach$ be a critically fixed rational map with $\deg(f)\geq 2$. Then the Tischler graph $\Tisch(f)$ is connected.
\end{theorem}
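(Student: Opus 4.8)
The plan is to exploit the strong invariance of the Tischler graph, reduce connectivity to a statement about its complementary regions, and then attack that statement dynamically.

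\textbf{Step 1 (Invariance).} First I would record that $\Tisch(f)$ is completely invariant. In the B\"ottcher coordinate $\tau_c$ the fixed ray of angle $\theta = 2\pi j/(d_c-1)$ is carried by $z\mapsto z^{d_c}$ to the ray of angle $d_c\theta \equiv \theta$, so $f$ maps each fixed ray homeomorphically onto itself and fixes both of its endpoints. Hence $f(\Tisch(f)) = \Tisch(f)$, every connected component of $\Tisch(f)$ is $f$-invariant, and, crucially, $P_f = C_f \subset \Tisch(f)$, so that
\[ f\colon \CDach\setminus f^{-1}(\Tisch(f)) \longrightarrow \CDach\setminus \Tisch(f) \]
is an \emph{unramified} covering of degree $d$ (no critical values lie in the target).

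\textbf{Step 2 (Topological reduction).} Next I would reduce the theorem to the claim that every connected component of $\CDach\setminus\Tisch(f)$ is simply connected. If $\Tisch(f)=A\sqcup B$ with $A,B$ nonempty and separated, a simple closed curve $\gamma$ disjoint from $\Tisch(f)$ separates $A$ from $B$; such a $\gamma$ lies in a single complementary face $W$ and is non-contractible there, since a disk it bounded in $W$ would have to contain $A$ or $B$, contradicting $W\cap\Tisch(f)=\varnothing$. Thus $W$ is not a disk, and conversely a non-disk face carries such a separating curve. Equivalently, writing $V-E+F = 1+c$ for the graph (with $E=2d-2$ and $c$ its number of components) together with $\sum_W \chi(W) = 2-V+E$, one obtains $c-1 = \sum_W\bigl(1-\chi(W)\bigr)$, so connectivity is precisely the assertion that all faces are disks. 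It therefore suffices to rule out an essential, $\Tisch(f)$-separating Jordan curve.

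\textbf{Step 3 (Ruling out a separating curve).} This is the heart. Assuming such a $\gamma$ exists, bounding disks $U\supseteq A$ and $U^{*}\supseteq B$ (each of $A,B$ a union of invariant components and hence containing at least one critical point), I would pull $\gamma$ back through the covering of Step 1: the component $\hat U$ of $f^{-1}(U)$ containing $A$ satisfies $A\subseteq\hat U$ and $\hat U\cap B=\varnothing$, since $f(B)=B\subset U^{*}$, so a component of $\partial\hat U\subset f^{-1}(\gamma)$ again separates $A$ from $B$. Because $\hat U$ contains the fixed critical points of $A$, the map $f\colon\hat U\to U$ has degree $\geq 2$ by Riemann--Hurwitz, and I would track the separating annulus between $A$ and $B$ under iterated pullback, using the hyperbolic expansion of $f$ near the Julia set together with the planar cyclic order of the fixed rays landing at each common boundary fixed point. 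The aim is to show that the landing points force $A$ and $B$ to share a vertex, so that no invariant separation can persist.

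\textbf{Main obstacle.} The difficulty is genuinely global, not numerical: the compactly-supported Euler relation for $f^{-1}(\Tisch(f))$, namely $2-V'+E' = d(2-V+E)$ with $E'=dE$ and $V'=dV-(2d-2)$, is an \emph{identity} and carries no information about $c$, so connectivity cannot be extracted from counting alone. One must use that the critical points are \emph{fixed} and that the fixed rays landing at a given boundary fixed point are arranged cyclically around it; the crux is to exclude the ``segregated landing'' configuration in which the fixed rays of distinct immediate basins land at disjoint sets of fixed points. I expect this to require either a local fixed-point index computation constraining the number of fixed rays at each landing point, or an invariant-multicurve argument showing that a persistent separation would produce an $f$-invariant multicurve incompatible with $f$ being a rational map.
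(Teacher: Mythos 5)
Your Steps 1 and 2 are sound: complete invariance of $\Tisch(f)$, and the reduction of connectivity to the statement that every complementary face is a disk (equivalently, that no Jordan curve disjoint from $\Tisch(f)$ separates it). But Step 3, which you yourself call the heart, is not a proof; it is a description of what a proof would need to accomplish. You set up the pullback of a separating disk and observe that $f$ has degree at least $2$ on the component containing $A$, but from there the argument dissolves into intentions: ``track the separating annulus under iterated pullback,'' ``the aim is to show that the landing points force $A$ and $B$ to share a vertex,'' ``I expect this to require either a local fixed-point index computation \dots or an invariant-multicurve argument.'' No mechanism is supplied that actually excludes the segregated-landing configuration, and neither of the two candidate tools is developed. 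As it stands, the proposal establishes only the (correct) topological reduction, not the theorem.

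Moreover, your ``Main obstacle'' diagnosis --- that connectivity ``cannot be extracted from counting alone'' --- is precisely backwards, and this is where the key idea is missed. The paper's proof \emph{is} a counting argument. The numerical fact you never use is that every vertex of $\Tisch(f)$ is a fixed point of $f$ (critical points and landing points of fixed rays alike), so the holomorphic fixed point formula gives $\#V \leq \#\fix(f) = d+1$, while Riemann--Hurwitz gives $\#E = 2d-2$ exactly. The only dynamical input beyond this is local: the graph is bipartite, so every boundary cycle of a face has even length, and a face bounded by a single cycle of length $2$ is impossible --- if such a bigon face $U$ contained no critical point, then (its two boundary edges being fixed and $f$ being injective near the landing point) $U$ would be a connected component of $f^{-1}(U)$ on which $f$ restricts to a homeomorphism of $\overline{U}$, yet $\overline{U}$ contains, besides the boundary edge $e_1$, another internal ray of the same basin mapping onto $e_1$, so $f|\overline{U}$ cannot be injective. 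This little lemma is essentially your own Step 3 pullback idea, but it only ever needs to be applied to a bigon, where no iteration, expansion, or multicurve theory is required. Hence every face has total boundary length at least $4$, so $\#F \leq \tfrac{1}{2}\#E$, and Euler's formula for a planar graph with $\ell$ components yields
\[
\ell + 1 = \#V - \#E + \#F \leq \#V - \tfrac{1}{2}\#E \leq (d+1) - (d-1) = 2,
\]
forcing $\ell = 1$. The relevant count is not the Euler identity for $f^{-1}(\Tisch(f))$ that you correctly dismiss as vacuous, but the comparison of the vertex count against the global fixed-point count $d+1$.
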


The statement above, conjectured by Kevin Pilgrim, is crucial for the combinatorial classification of critically fixed rational maps and has other important applications that we discuss below.

\subsection{Combinatorial classification problem} One of the main open problems in holomorphic dynamics asks to find a complete combinatorial description of all postcritically-finite rational maps in terms of finite data. More precisely,  given a postcritically-finite rational map, one first wants to assign a certain finite certificate, or \emph{model}, to it. Then, one wants to describe all the models that arise and determine whether there is a one-to-one correspondence between a class of postcritically-finite rational maps (up to conformal conjugacy) and a catalog of models, that is, provide a \emph{classification} of maps from the class.  Solving this problem is a major step towards understanding the structure of parameter spaces of rational maps in general. 

First combinatorial models, given by finite invariant graphs, were constructed for polynomial maps by Douady and Hubbard in the 1980's \cite{DH_Orsay}.  Later these models were used to classify all postcritically-finite polynomials \cite{BFH_Class,Poirier}.  However, the case of non-polynomial rational maps remains wide open and draws a lot of attention.  To date there are very few classification results and they are typically quite involved.  Besides polynomials,  we refer the reader to \cite{RussellDierk_Class,FixedNewton,CubicNewton} for various classification results for \emph{Newton maps} (these are rational maps that arise when applying Newton's root-finding method to polynomials).

Completing the work started in \cite{Pilgrim_crit_fixed}, we prove the following result.

\begin{theorem}\label{thm:classif_cr_fixed}
There is a canonical bijection between the conformal conjugacy classes of critically fixed rational maps of degree at least two and the isomorphism classes of connected planar embedded graphs with at least one edge and no loops.
\end{theorem}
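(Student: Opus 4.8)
The plan is to establish the bijection in two directions, building on the classification framework set up in \cite{Pilgrim_crit_fixed}. The starting point is that to a critically fixed rational map $f$ we have already associated its Tischler graph $\Tisch(f)$, a planar embedded graph. Theorem~\ref{thm:Tisch-connected} now guarantees that this graph is \emph{connected}, which is precisely the property that was missing to close the classification loop. So the first thing I would do is set up the map $f\mapsto \Tisch(f)$ carefully: I would verify that it is well-defined on conformal conjugacy classes (a conformal conjugacy carries immediate basins to immediate basins, B\"{o}ttcher maps to B\"{o}ttcher maps, and fixed internal rays to fixed internal rays, hence induces an isomorphism of planar embedded graphs), and I would check that $\Tisch(f)$ has no loops — an edge is a fixed internal ray, whose two endpoints are the critical point $c$ and a landing point $\tau_c(e^{i\theta})$ on $\partial\Omega_c$; these cannot coincide since $c\in\Omega_c$ is interior while the landing point lies on the boundary.

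\medskip

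Next I would construct the inverse map, from planar embedded connected loopless graphs to critically fixed rational maps. Given such a graph $G$, the idea (already present in the Tischler/Cordwell et al.\ approach) is to build a \emph{topological} model: one thickens $G$ and, on each face of the embedding, prescribes a branched covering of the sphere whose local degree at each vertex matches the vertex degree in $G$, producing a postcritically-finite branched self-map $F$ of $S^2$ with $C_F\subset\fix(F)$. The combinatorial data of $G$ determines $F$ up to the appropriate notion of equivalence, and conversely $\Tisch$ recovers $G$ from $F$. I would then invoke Thurston's characterization theorem to promote the topological model $F$ to a genuine rational map: since critically fixed maps have all postcritical points fixed, the orbifold is hyperbolic only in controlled ways and one must check the absence of Thurston obstructions (Levy/Thurston cycles). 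This realization step is where \emph{connectivity of $\Tisch(f)$} is essential, because the argument ruling out obstructions (or the argument that the Thurston pullback has a fixed point) uses that the invariant graph is connected so that the complementary regions are disks carrying the branching in a coherent way.

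\medskip

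The remaining work is to verify that the two constructions are mutually inverse and respect the equivalence relations on both sides, i.e.\ that $\Tisch$ of the realized map is isomorphic (as a planar embedded graph) to the graph one started with, and that two maps with isomorphic Tischler graphs are conformally conjugate. For the forward composition $G\mapsto f\mapsto\Tisch(f)\cong G$, one traces through the construction: the fixed internal rays of the realized map sit exactly along the edges used to build the topological model, and the matching of local degrees guarantees the vertex structure agrees. For injectivity, two critically fixed rational maps with isomorphic Tischler graphs give Thurston-equivalent topological models (the planar embedded graph, being connected, encodes the full combinatorial/isotopy data of the branched cover), and rigidity for postcritically-finite maps (Thurston uniqueness) upgrades Thurston equivalence to conformal conjugacy.

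\medskip

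\textbf{The main obstacle} I expect is the realization step: proving that every planar embedded connected loopless graph actually arises, which amounts to showing the associated topological branched cover is unobstructed in the sense of Thurston. Here connectivity, supplied by Theorem~\ref{thm:Tisch-connected}, does the heavy lifting — a disconnected graph would correspond to a map whose invariant spine fails to fill the sphere coherently and could harbor a Thurston obstruction, whereas connectivity forces the complementary faces to be Jordan domains on which the branching is prescribed consistently, allowing one to exclude Levy cycles. The bookkeeping of matching combinatorial isomorphism classes with conformal conjugacy classes (as opposed to mere topological conjugacy) is the second delicate point, handled by Thurston rigidity, and I would be careful to state the loopless hypothesis correctly, since it corresponds exactly to the observation above that a fixed internal ray never returns to its own critical point.
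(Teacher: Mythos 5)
Your proposal founders on a concrete identification error: the graph in the bijection of Theorem \ref{thm:classif_cr_fixed} is \emph{not} the Tischler graph, so the map $f\mapsto\Tisch(f)$ you take as the forward direction cannot work. The Tischler graph is bipartite, with parts $C_f$ and the set $R_f$ of landing points of the fixed internal rays, and it has $2\deg(f)-2$ edges; hence no graph containing an odd cycle (a triangle, say) and no graph with an odd number of edges is ever a Tischler graph. For instance, the single-edge graph corresponds under the paper's bijection to $z^2$, whose Tischler graph is a two-edge path. So $f\mapsto\Tisch(f)$ is far from surjective onto isomorphism classes of connected loopless planar graphs. Relatedly, your two constructions are not mutually inverse: blowing up a graph with $n$ edges yields a map of degree $n+1$, whose Tischler graph has $2n$ edges, so you never recover the graph you started from. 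The missing idea is the \emph{charge graph} $\Charge(f)$: its vertex set is $C_f$ and it has one edge $e(Q)$ through each face $Q$ of $\Tisch(f)$, joining the two critical points on $\partial Q$. This is the graph the paper puts in bijection with $f$. Constructing it requires the structural Corollary \ref{cor:Tisch-structure} (each face of $\Tisch(f)$ is a quadrilateral or a bigon with a sticker, the vertex set is $\fix(f)$, and the number of faces is $\deg(f)-1$), which is extracted from the equalities forced in the Euler-characteristic computation proving Theorem \ref{thm:Tisch-connected}; the substantive step is then Proposition \ref{prop:diagonal-blow}, which exhibits explicit Jordan regions witnessing that $f$ itself is obtained from $\Charge(f)$ by blowing up its edges.

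Your treatment of the graph-to-map direction also misattributes the role of connectivity. The paper does not re-prove unobstructedness via Levy cycles: realization of a connected graph as a rational map is quoted from Pilgrim--Tan Lei \cite[Corollary 3]{PT} (blow-ups along connected graphs are combinatorially equivalent to rational maps), and injectivity of the resulting assignment is quoted from \cite[Theorem 1.3]{Pilgrim_crit_fixed}. Theorem \ref{thm:Tisch-connected} is not what makes realization work --- connectivity of the input graph is a hypothesis on that side of the correspondence --- rather, it is what makes \emph{surjectivity} work: it yields Corollary \ref{cor:Tisch-structure}, hence the charge graph and Proposition \ref{prop:diagonal-blow}, i.e., the statement that every critically fixed rational map arises from the blow-up construction. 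Without the charge graph, your outline has no candidate for the inverse map and no mechanism for proving surjectivity.
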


We note that planar embedded graphs considered in this paper are allowed to have multiple edges.  The bijection in Theorem \ref{thm:classif_cr_fixed} is constructed using the ``blowing up'' operation from \cite[\S 2.5]{PT}; see Section \ref{sec:classification} for more details.

\subsection{Global curve attractor problem} 
Let $f\colon\CDach\to\CDach$ be a postcritically-finite rational map, and let $\mathscr{C}(f)$ be the set of all simple closed curves in $\CDach\setminus P_f$.  We denote by $[\gamma]$ the isotopy class of a curve $\gamma\in \mathscr{C}(f)$ relative to $P_f$, and by $\mathcal{C}(f)$ the set of all such isotopy classes.  A \emph{pullback} of  a curve $\gamma\in\mathscr{C}(f)$ under $f$ is a connected component of $f^{-1}(\gamma)$.  By the isotopy lifting property (see, for example, \cite[Lemma 6.9 and Proposition 11.3]{THEBook}), the map $f$ defines a \emph{pullback relation} $\xleftarrow{f}$ on the elements of~$\mathcal{C}(f)$: we set $[\gamma] \xleftarrow{f} [\delta]$ whenever $\delta$ is a pullback of $\gamma$ under $f$.

The global curve attractor problem asks the following question: Given a postcritically-finite rational map $f\colon \CDach\to\CDach$, is there a finite set $\mathcal{A}(f)\subset \mathcal{C}(f)$ such that, for every curve $\gamma\in\mathscr{C}(f)$,  all pullbacks $\delta$ of $\gamma$ under $f^n$ satisfy $[\delta]\in \mathcal{A}(f)$ for all sufficiently large $n\in\N$? The minimal set $\mathcal{A}(f)$ with this property (if it exists) is called the \emph{global curve attractor} of $f$.

It is conjectured that a finite global curve attractor exists for all postcritically-finite rational maps with \emph{hyperbolic orbifold}; see, for example, 
 \cite[Conjecture 1.1]{Pilgrim_Pullback}. We note that postcritically-finite rational maps with non-hyperbolic (i.e., parabolic) orbifold are very special and are well-understood; see the discussion in \cite{DH_Th_char} and \cite[Chapter 3]{THEBook}. In particular,  every critically fixed rational map $f$ with $\#C_f\geq 3$ has a hyperbolic orbifold.

In this article, we provide a positive answer to the global curve attractor problem for critically fixed rational maps. 

\begin{theorem}\label{thm:finite-curve-attractor}
Each critically fixed rational map has a finite global curve attractor. 
\end{theorem}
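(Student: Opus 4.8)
The plan is to reduce Theorem~\ref{thm:finite-curve-attractor} to the analytic sufficient condition of Pilgrim \cite[Theorem~1.4]{Pilgrim_Alg_Th}, using the connected invariant Tischler graph provided by Theorem~\ref{thm:Tisch-connected} as the combinatorial backbone that organizes the pullback dynamics of curves. First I would record the rigid structure forced by critical fixedness. Since every critical point is fixed, one has $P_f=C_f$, a finite set, and every point of $P_f$ is a super\-attracting fixed point; in particular $f$ is hyperbolic, uniformly expanding on a neighbourhood of its Julia set in a suitable conformal metric. Because the critical values satisfy $f(C_f)=C_f=P_f$ and $f(P_f)=P_f$, the restriction $f\colon\CDach\setminus f^{-1}(P_f)\to\CDach\setminus P_f$ is an unbranched covering, so ``pullback of curves'' is exactly the covering\-space pullback operation on isotopy classes rel $P_f$. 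Finally, by the discussion preceding Theorem~\ref{thm:Tisch-connected}, each fixed internal ray is mapped onto itself, so the Tischler graph $T:=\Tisch(f)$ is a finite forward\-invariant graph, $f(T)=T$, whose vertex set contains $P_f$ (the centres of the immediate basins); by Theorem~\ref{thm:Tisch-connected} it is connected.

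The most tempting route is purely combinatorial: put $\gamma\in\mathscr{C}(f)$ in minimal position with respect to the spine $T$ and track the crossing number $\#(\gamma\cap T)$ under pullback, hoping for monotonicity. I would pursue this to see how far it goes, but I expect it to fail as stated. Since $T\subseteq f^{-1}(T)$ and $f$ restricted to a pullback $\delta$ covers $\gamma$ with some degree $m$, one only obtains $\#(\delta\cap T)\le\#(\delta\cap f^{-1}(T))=m\cdot\#(\gamma\cap T)$, an upper bound that grows with the local degree. Correspondingly, the tiles of $\CDach\setminus f^{-n}(T)$ do \emph{not} all shrink: preimages of fixed rays are again radial rays, so inside each immediate basin the level-$n$ tiles remain angular sectors spanning the full basin. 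Thus naive length or diameter estimates are unavailable, and the finiteness must come from the genuine (but non\-uniform) expansion of $f$.

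The approach I would therefore carry out is to verify the hypotheses of \cite[Theorem~1.4]{Pilgrim_Alg_Th} via the moduli\-space contraction suggested there. Concretely, I would use the hyperbolicity of $f$ together with the connected invariant spine $T$ to build a finite combinatorial model of the pullback relation on the punctured sphere $\CDach\setminus P_f$: inside each immediate basin $\Omega_c$ the dynamics is conjugate to $z\mapsto z^{d_c}$ in B\"ottcher coordinates, across the Julia set $f$ is expanding, and $T$ glues these local pictures into a single $f$\-invariant structure through which every essential curve must pass. The goal of this step is to show that the induced operation on isotopy classes rel $P_f$ is eventually contracting, so that for each $\gamma$ the pullbacks of $\gamma$ under $f^n$ realise only finitely many isotopy classes for all sufficiently large $n$; collecting these over a generating set of curves produces the desired finite set $\mathscr{A}(f)$.

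The main obstacle is precisely the non\-uniform expansion: $f$ contracts toward the critical points inside the super\-attracting basins, so a pullback component $\delta$ may wind arbitrarily deep into a basin without shrinking, and its crossing data with the spine may grow. The substantive task is to convert the honest expansion of $f$ on its Julia set, combined with the explicit B\"ottcher structure in the basins, into global combinatorial confinement of the essential topological type of $\delta$ rel $P_f$. It is exactly here that the connectivity of $\Tisch(f)$ is indispensable: a disconnected spine would leave the basins unlinked and would fail to confine the way an arbitrary curve separates $P_f$, so the number of limiting isotopy classes could be infinite. In this sense Theorem~\ref{thm:Tisch-connected} is the ingredient that upgrades the basin\-wise B\"ottcher coordinates into a single connected invariant graph capable of controlling the global curve dynamics, and it is what makes the hypotheses of \cite[Theorem~1.4]{Pilgrim_Alg_Th} verifiable for every critically fixed rational map.
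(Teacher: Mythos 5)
Your proposal does not actually contain a proof: everything after your second paragraph is a statement of intent. You say you would ``verify the hypotheses of \cite[Theorem~1.4]{Pilgrim_Alg_Th}'', but you never state what those hypotheses are, let alone check them, and the step you yourself call ``the substantive task'' --- converting expansion near the Julia set into confinement of the isotopy classes of pullbacks --- is precisely the content of Theorem~\ref{thm:finite-curve-attractor}. Naming the obstacle (pullbacks winding deep into superattracting basins, crossing data a priori growing) is not the same as overcoming it: no mechanism in your outline produces a finite set of isotopy classes, and connectivity of $\Tisch(f)$ enters only as a heuristic (``a disconnected spine would fail to confine\dots''), not through any verifiable implication. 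Note also that the paper distinguishes Pilgrim's sufficient condition from his analytic moduli-space method; your proposal conflates the two and carries out neither.

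Moreover, your dismissal of the combinatorial route is premature --- it is exactly the route the paper takes. Your objection is valid for the Tischler graph itself: since $\TC\subset f^{-1}(\TC)$, crossing numbers with $\TC$ can grow under pullback. The fix is to change the spine. The paper works with a spanning tree $\sS$ of the charge graph $\Charge(f)$, using Proposition~\ref{prop:diagonal-blow}: $f$ is obtained from $\Charge(f)$ by blowing up its edges. Hence each edge $e_j$ of $\sS$ has a preimage arc $e'_j$ that maps homeomorphically onto $e_j$ and is isotopic to $e_j$ relative to $C_f$, so for $\gamma$ in minimal position the pullbacks $\delta$ satisfy $\sum_{\delta}\delta\cdot\sS\leq\sum_{j}\#\bigl(f^{-1}(\gamma)\cap e'_j\bigr)=\sum_{j}\#(\gamma\cap e_j)=\gamma\cdot\sS$, which is Lemma~\ref{lem:complexity-decreases}; and the inequality is strict as soon as $\gamma\cdot e_j\geq 2$ for some $j$, because a lift of $\gamma$ entering the blown-up Jordan region $D_j$ must exit through the same boundary arc, creating two intersection points removable by isotopy. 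Induction on the complexity $\gamma\cdot\sS$ then forces every iterated pullback, after finitely many steps, into the finite set of isotopy classes of curves meeting each edge of $\sS$ at most once, which is the desired attractor. This argument is purely topological: no hyperbolicity, B\"ottcher coordinates, or moduli-space contraction is needed. What is needed --- and what your outline never exploits --- is the structure theory downstream of Theorem~\ref{thm:Tisch-connected} (Corollary~\ref{cor:Tisch-structure} and Proposition~\ref{prop:diagonal-blow}), which supplies the blow-up description of $f$ on which the counting rests.
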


To the best of our knowledge,  critically fixed rational maps were the first family of rational maps of arbitrary degree for which the global curve attractor problem was solved.  Recently,  the problem was also solved for all postcritically-finite  polynomials \cite{LifitingTrees}.  Some other explicit examples of rational maps with exactly four postcritical points are studied in \cite{Lodge_Boundary, Lodge_Quadratic,BHI_Obstructions}.  However, for general postcritcally-finite rational maps the conjecture above remains wide open.

We also note that the mapping properties of simple closed curves play an important role in \emph{Thurston's characterization of rational maps} \cite{DH_Th_char}. In particular,  the pullback relation defined above is closely related to the  \emph{Thurston pullback map} from the proof of Thurston's theorem; see \cite{Lodge_Boundary} for details.

\subsection{Further applications and connections} Below,  we briefly discuss some further applications of Tischler graphs and other recent related work.

Since each edge of the Tischler graph $\Tisch(f)$ of a critically fixed rational map $f$ is invariant under $f$, an arbitrary spanning tree in $\Tisch(f)$ is an $f$-invariant planar embedded tree. Consequently, we have the following immediate corollary. 

\begin{cor}\label{cor:crit-fix-1-tile-subd}
Let $f$ be a critically fixed rational map. Then there exists a finite planar embedded tree with the vertex set containing $C_f$ that is invariant under $f$. 
\end{cor}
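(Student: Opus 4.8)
The plan is to deduce the corollary directly from Theorem~\ref{thm:Tisch-connected} together with the observation, recorded just above the statement, that every edge of $\Tisch(f)$ is invariant under $f$. First I would note that $\Tisch(f)$ is a \emph{finite} graph: there are only finitely many critical points, and the immediate basin $\Omega_c$ of each $c\in C_f$ carries exactly $d_c-1$ fixed internal rays. By Theorem~\ref{thm:Tisch-connected} this finite graph is connected, so it possesses a spanning tree $T$, which is again a finite planar embedded tree since it inherits the planar embedding of $\Tisch(f)\subset\CDach$.

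Next I would identify the vertex set. Each critical point $c$ equals the center $\tau_c(0)$ of its immediate basin and is therefore the common inner endpoint of all fixed internal rays in $\Omega_c$; hence $c$ is a vertex of $\Tisch(f)$. Since a spanning tree contains every vertex of the ambient graph, $T$ contains all of $C_f$. Because $f$ is critically fixed we have $f(c)=c$ for every $c\in C_f$, so $f^n(C_f)=C_f$ for all $n$ and thus $P_f=C_f$; consequently the vertex set of $T$ contains $P_f=C_f$, as required.

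It remains to verify $f$-invariance. For a fixed internal ray $e$ of angle $\theta=2\pi j/(d_c-1)$ in $\Omega_c$, the B\"ottcher coordinate conjugates $f$ to $z\mapsto z^{d_c}$, under which the radial arc $\{t e^{i\theta}: t\in[0,1]\}$ is sent to $\{t^{d_c}e^{i\theta}: t\in[0,1]\}$; since $t\mapsto t^{d_c}$ is a homeomorphism of $[0,1]$ and $d_c\theta\equiv\theta \pmod{2\pi}$, the map $f$ carries $e$ homeomorphically onto itself and fixes both of its endpoints. Taking the union over the edges of $T$ yields $f(T)=T$, so $T$ is $f$-invariant.

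The argument is short precisely because the substantive content---connectivity of $\Tisch(f)$---is supplied by Theorem~\ref{thm:Tisch-connected}; the only points requiring care are the edgewise invariance (which reduces to the elementary B\"ottcher computation above) and the bookkeeping that a spanning tree retains all vertices, and hence all of $P_f=C_f$. I do not anticipate a genuine obstacle beyond these verifications.
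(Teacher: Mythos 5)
Your proof is correct and follows exactly the paper's route: the paper deduces the corollary in one sentence by observing that every edge of $\Tisch(f)$ is $f$-invariant, so any spanning tree of the (connected, by Theorem~\ref{thm:Tisch-connected}) Tischler graph is a finite $f$-invariant planar embedded tree whose vertex set contains $C_f=P_f$. Your write-up merely makes explicit the same bookkeeping (finiteness, the B\"ottcher computation showing each fixed ray maps onto itself, and that a spanning tree retains all vertices), all of which is sound.
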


The invariant tree as above defines a \emph{one-tile subdivision rule} for the dynamics of $f$ in the sense of Cannon--Floyd--Parry \cite{CFP_Subdivision}. Furthermore, it appears to be very useful for the computation of the \emph{iterated monodromy group} of the map $f$ as discussed in \cite[Chapter 5]{H_Thesis}. 

Iterated monodromy groups were introduced by Nekrashevych in 2001 as (self-similar) groups that are naturally associated with certain dynamical systems, such as the iteration of a post\-critically-finite rational map $f\colon \CDach\to\CDach$ \cite{NekraIMG02}.  The iterated monodromy group contains all the essential information about the dynamics of $f$: one can reconstruct from the iterated monodromy group the action of the map on its Julia set.  

An invariant tree as in Corollary \ref{cor:crit-fix-1-tile-subd} drastically simplifies the computations of the iterated monodromy group action for critically fixed rational maps carried in \cite{Pilgrim_crit_fixed}.  Furthermore, it makes it possible to show that the iterated monodromy groups of critically fixed rational maps have quite ``exotic''  properties from the point of view of classical group theory. More precisely, the following result is proven in \cite[Corollary 5.6.6 and Corollary 6.4.3]{H_Thesis}.

\begin{theorem}\label{thm:img-crit-fix}
The iterated monodromy group of a critically fixed rational map $f$ with ${\#C_f\geq 3}$ is an amenable group of exponential growth.
\end{theorem}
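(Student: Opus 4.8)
The plan is to split the statement into its two assertions---amenability and exponential growth---and to treat each through the self-similar action of $G := \IMG(f)$ on the rooted tree $T$ of iterated $f$-preimages of a basepoint. The first task is to produce a concrete wreath recursion for a convenient generating set. Since $P_f = C_f$, a natural generating set is the collection of loops $\{g_c\}_{c\in C_f}$, each $g_c$ encircling a single fixed critical point $c$. I would use the invariant planar tree furnished by Corollary~\ref{cor:crit-fix-1-tile-subd} (a spanning tree of $\Tisch(f)$, whose edges are $f$-invariant) as a marking of the sphere: it supplies a canonical choice of connecting paths and a canonical indexing of the $d$ preimages of the basepoint, from which the sections in $g_c = \sigma_c\,(g_c|_1, \dots, g_c|_d)$ can be read off explicitly. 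The fixedness of every critical point forces each $g_c$ to act, at the first level, by a cycle of length $d_c$ coming from the local model $z\mapsto z^{d_c}$, with the remaining preimages permuted in a controlled, finitary manner dictated by the tree.

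For amenability, the key structural claim is that the resulting automaton is \emph{bounded} in the sense of Sidki: the number of nontrivial restrictions $g|_v$ over vertices $v$ at level $n$ stays bounded as $n\to\infty$. Intuitively, a loop around a fixed critical point can be dragged into at most one nontrivial section per level, since the only place a nontrivial portion of the loop survives a lift is the sector attached to the fixed point itself; everywhere else the lift is either trivial or a finitary rearrangement prescribed by the invariant tree. Once boundedness is verified, amenability follows from the Bartholdi--Kaimanovich--Nekrashevych theorem that every group generated by a bounded automaton is amenable.

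For exponential growth, I would exhibit a free sub-semigroup of rank two inside $G$, which immediately rules out subexponential growth. The hypothesis $\#C_f\geq 3$ is what makes room for this: with at least three fixed critical points the recursion is rich enough that two suitable products of generators, say $a$ and $b$, have disjoint ``active'' behaviour on complementary subtrees, so that a ping-pong argument on the boundary $\partial T$ (or on the orbital Schreier graphs) shows that no nontrivial positive word in $a,b$ can be trivial. Excluding the small cases $\#C_f\le 2$ is consistent with this, since those maps are conjugate to power maps or to other highly degenerate models whose monodromy groups are virtually abelian.

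The main obstacle is the explicit computation and the verification of the bounded-activity condition: one must control how the fixed internal rays of the Tischler graph interact with the full preimage tree and confirm that no accumulation of nontrivial sections occurs at deep levels. This is where the planarity and invariance of the Tischler graph (Theorem~\ref{thm:Tisch-connected} and Corollary~\ref{cor:crit-fix-1-tile-subd}) do the real work, pinning down the combinatorics of the recursion tightly enough that boundedness, and hence amenability, can be established; producing and verifying the freeness of the sub-semigroup needed for exponential growth is the second, more hands-on, difficulty.
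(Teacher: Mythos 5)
A preliminary caveat: the paper itself contains no proof of Theorem \ref{thm:img-crit-fix}. The result is imported from the author's thesis \cite[Corollaries 5.6.6 and 6.4.3]{H_Thesis}; what this paper contributes is the toolkit that makes the wreath recursion computable, namely the invariant tree of Corollary \ref{cor:crit-fix-1-tile-subd} and the blow-up description of $f$ in Proposition \ref{prop:diagonal-blow}. Measured against that toolkit, your amenability half follows the expected route (realize the generators by a finite-state automaton of bounded activity, then invoke Bartholdi--Kaimanovich--Nekrashevych), but it has a genuine gap: boundedness is asserted, not proven. The sections $g|_v$ of a loop are only defined after a choice of connecting paths from the basepoint to its $d$ preimages, and a careless choice conjugates every section by elements that can destroy both finite-state-ness and bounded activity; your sentence ``a loop can be dragged into at most one nontrivial section per level'' is exactly the statement that needs proof, not a heuristic one may skip. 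The clean way to obtain it is supplied by Proposition \ref{prop:diagonal-blow}: take as generators loops dual to the edges of a spanning tree of the \emph{charge graph} (rather than your loops $g_c$ around critical points). Since each blow-up disk $D_j$ contains exactly one preimage of the basepoint and $f|D_j$ is a homeomorphism onto $\CDach\setminus e_j$, while the complement of the tree is a disk containing no postcritical points, one can choose connecting paths so that every such generator has exactly one nontrivial first-level section, itself again a single generator; your $g_c$'s are products of these, so their activity is bounded as well. (The counting in the proof of Lemma \ref{lem:complexity-decreases} is the geometric shadow of precisely this computation.) Without this verification, or an equivalent one, the amenability half is incomplete.

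The exponential-growth half is a gap in a stronger sense: nothing is constructed. You name no candidate elements $a,b$, no ping-pong sets, and the hypothesis $\#C_f\geq 3$ enters only as ``makes room''. This cannot be waved through, because exponential growth is genuinely not automatic for the class of groups you have set up: bounded-automaton iterated monodromy groups can have \emph{intermediate} growth (the iterated monodromy group of $z^2+i$ is the standard example), so ``bounded automaton with a rich recursion'' implies nothing about the growth type, and by Sidki's theorem such groups contain no free subgroups, so only a semigroup ping-pong can possibly work --- which, to your credit, is what you aim for. But exhibiting two positive words with the required dynamics and showing exactly where $\#C_f\geq 3$ (equivalently, $f$ not conjugate to $z^d$, whose iterated monodromy group is the adding machine $\mathbb{Z}$) is used constitutes the substantive content of the growth claim; as written, that half of the theorem remains unproved in your proposal.
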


In this way, critically fixed rational maps form a large class of maps for which we have a good understanding of the properties of iterated monodromy groups. 

Theorem \ref{thm:classif_cr_fixed} provides a base ingredient for the combinatorial classification of the \emph{topological analogs} of critically fixed rational maps, that is, orientation-preserving branched covering maps of the topological $2$-sphere for which every critical point is fixed; see \cite{HluPro}.  In the same paper,  combinatorial models of critically fixed rational maps were used to study the \emph{twisting problem},  which, loosely speaking, asks how the combinatorial model of a given rational map changes if we postcompose
it by a homeomorphism.

Finally,  we would like to mention similar classification results in the setting of \emph{critically fixed anti-rational maps}, that is,  the complex conjugates $\bar f$ of rational maps $f\colon \CDach\to \CDach$ for which all points in $C_{\bar f}$ are fixed \cite{LLM_Class, Geyer}.  Such maps also have intriguing connections to Kleinian groups \cite{LLM_Class} and to the theory of gravitational lensing \cite{GravLens}.

\subsection*{Structure of the article} First, we review some graph theoretical notions that we use in this article in Section \ref{sec:graph-theory}. Then, we provide an example of a critically fixed rational map and its Tischler graph in Section \ref{sec:example}. The connectivity of Tischler graphs, that is, Theorem~\ref{thm:Tisch-connected}, is proven in Section~\ref{sec:proof-main}. We discuss the combinatorial classification of critically fixed rational maps and prove Theorem \ref{thm:classif_cr_fixed} in Section~\ref{sec:classification}. Finally, we establish Theorem~\ref{thm:finite-curve-attractor}, which answers the global curve attractor problem for critically fixed rational maps,  in Section \ref{sec:global-curve-attractor}.

\subsection*{Notation} The cardinality of a set $S$ is denoted by $\#S$. We write $\N$, $\Z$, $\R$, and $\C$ for the sets of positive integers, integers, real numbers, and complex numbers, respectively. 
The Riemann sphere is denoted by $\CDach:=\C \cup \{\infty\}$. 

For a subset $U\subset \CDach$, we denote by $\overline{U}$, $\interior{U}$, and $\partial U$ the topological closure, the interior, and the boundary of $U$ in $\CDach$, respectively. For a Jordan arc $e$ in $\CDach$ joining two distinct points $z_1$ and~$z_2$, we set $\inter(e):= e\setminus\{z_1,z_2\}$.

Let $f\colon \CDach\to \CDach$ be a branched covering map, for instance,  a rational map. We denote by $\deg(f)$ the topological degree of $f$.  For $n\in\N$ we denote by $f^n$ the $n$-th iterate of $f$.  
The sets of  fixed points, critical points, and postcritical points of $f$ are denoted by $\fix(f)$, $C_f$, and $P_f$, respectively.  
If $U\subset \CDach$, then $f^{-n}(U)$ stands for the preimage of $U$ under $f^n$, that is, $f^{-n}(U)=\{z\in \CDach : f^n(z)\in U\}$. For simplicity, we set $f^{-n}(z) \coloneq f^{-n}(\{z\})$ for $z\in \CDach$.  Finally, we denote the restriction of $f$ to $U$ by~$f|U$. 

\section{Planar embedded graphs}\label{sec:graph-theory}

The goal of this section is to define planar embedded graphs, introduce related
constructions, and set up the notation. 

A \emph{closed arc} $e$ in $\CDach$ is the image $\eta(\mathopen[0{,}1\mathclose])$ of the closed unit interval $\mathopen[0{,}1\mathclose]$ under a continuous map $\eta\colon \mathopen[0{,}1\mathclose]\to \CDach$ such that $\eta|\mathopen(0{,}1\mathclose)$ is injective. The images $\eta(0)$ and $\eta(1)$ are called the \emph{endpoints} of the arc~$e$, which \emph{joins} them.  If $\eta(0) = \eta(1)$, we call $e$ a \emph{loop}, otherwise, $e$ is a \emph{Jordan arc}. The set $\inter(e):=\eta\left(\mathopen(0{,}1\mathclose)\right)=e\setminus\{\eta(0), \eta(1)\}$ is called the \emph{interior} of $e$. Note that $\inter(e)$ is not the same as $\interior{e}$. 

Formally, a \emph{planar embedded graph} $\sG$ is a pair $(V,E)$, where $V$ is a non-empty finite set of points in the Riemann sphere $\CDach$ and $E$ is a finite set of Jordan arcs in $\CDach$, such that
\begin{enumerate}[label=\text{(\roman*)},font=\normalfont]
\item for each $e\in E$, both endpoints of $e$ are in $V$;
\item for all distinct $e,e'\in E$, their interiors $\inter(e)$ and $\inter(e')$ are disjoint.
\end{enumerate}
The sets $V$ and $E$ are called the \emph{vertex} and \emph{edge} sets of $\sG$, respectively. Note that our notion of a planar embedded graph does not allow loops, but it does allow \emph{multiple edges}, that is, distinct edges that join the same pair of vertices. 

Let $\sG=(V,E)$ and $\sG'=(V',E')$ be two planar embedded graphs. We say that $\sG$ is (\emph{planar}) \emph{isomorphic}
to $\sG'$ if there exists an orientation-preserving homeomorphism $\phi\colon \CDach\to\CDach$ such that
it bijectively maps the vertices and edges of $\sG$ to the vertices and edges of $\sG'$, that is,  $V'= \phi(V)$ and $E'=\{\phi(e): e\in E\}$. Clearly,  this notion induces an equivalence relation on the set of all planar embedded graphs. An equivalence class of this relation is called an \emph{isomorphism class
of planar embedded graphs}.

In the following, we assume that $\sG=(V,E)$ is a planar embedded graph. 
The \emph{degree} of a vertex $v$ in $\sG$, denoted $\deg_\sG(v)$, is the number of edges of $\sG$ incident to $v$. Note that $2\#E = \sum_{v\in V} \deg_\sG(v)$.

The graph $\sG$ is called \emph{bipartite} if the vertices of $\sG$ can be partitioned into two disjoint sets $V_1$ and $V_2$ so that every edge of $\sG$ joins a vertex in $V_1$ to a vertex in $V_2$. In this case, the vertex subsets $V_1$ and $V_2$ are called the \emph{parts} of the graph $\sG$.

A \emph{walk between vertices $v$ and $v'$} in $\sG$ is a sequence $v_0=v,e_0,v_1,e_1,\dots,e_{n-1},v_n=v'$, where $e_j$ is an edge of $\sG$ that joins the vertices $v_{j}$ and $v_{j+1}$ for each $j=0,\dots,n-1$. A walk $v_0,e_0,$ $v_1,e_1,\dots,e_{n-1},v_n$ with $v_0=v_n$ is called a \emph{circuit of length $n$} in $\sG$ and is denoted by $(e_0,e_1,\dots,e_{n-1})$. 

A \emph{subgraph} of $\sG$ is a planar embedded graph $\sG'=(V',E')$ with $V'\subset V$ and $E'\subset E$.
 A \emph{connected component} of $\sG$ is a subgraph $\sG'=(V',E')$ such that 
\begin{enumerate}[label=\text{(\roman*)},font=\normalfont]
\item for all $v',v''\in V'$, there exists a walk between $v'$ and $v''$ in $\sG$; 
\item for all $v'\in V'$ and $v\in V\setminus V'$, there is no walk between $v$ and $v'$ in $\sG$;
\item $E'$ consists of all edges of $\sG$ with both endpoints in $V'$.
\end{enumerate}
The graph $\sG$ is called \emph{connected} if it has a unique connected component.

The subset $\GC:=V\cup \bigcup_{e\in E} e$ of $\CDach$ is called the \emph{realization} of $\sG$. A \emph{face} of $\sG$ is a connected component of $\CDach\setminus\GC$. 

As follows from \cite[Lemma 4.2.2]{DiestelGraph}, the topological boundary $\partial U$ of each face $U$ of $\sG$ may be viewed as the realization of a subgraph of $\sG$. Furthermore, a walk around a connected component of the boundary $\partial U$ traces a circuit $(e_0,e_1,\dots,e_{n-1})$ in $\sG$ such that each edge of $\sG$ appears zero, one, or two times in the sequence $e_0,e_1,\dots,e_{n-1}$. We will say that the circuit $(e_0,e_1,\dots,e_{n-1})$ \emph{bounds} the face $U$ or \emph{traces} (a connected component of) the boundary $\partial U$. 

\begin{example}
Consider the planar embedded graph $\sG$ shown in Figure \ref{fig:Graph_G}. It has four vertices, denoted by $v_1, \, v_{-1},\, v_c, \, v_{-c}$, and four edges, denoted by $e_1, \,  e_2,\, e_3, \, e_4$. Clearly, $\sG$ is connected. It has two faces, denoted by $U_1$ and $U_2$, such that $U_1$ is bounded by the circuit $(e_1,e_3,e_2,e_2,e_4,e_1)$ and $U_2$ is bounded by the circuit $(e_3,e_4)$.
\end{example}

\begin{figure}
  \centering
  \begin{subfigure}[b]{0.45\textwidth}
    \begin{overpic}
      [width=7cm, tics=10,
      ]{sphere_G.png}
      \put(50,10){$e_4$}
      \put(50,73){$e_3$}
      \put(26,38){$e_1$}
      \put(74,38){$e_2$}
      \put(50,35){$U_1$}
      \put(18,70){$U_2$}
      \put(87.5,37){$v_1$}
      \put(9.5,37){$v_{-1}$}       
      \put(39.5,32){$v_{-c}$}
      \put(59,32){$v_c$}   
    \end{overpic}
\caption{}
    \label{fig:Graph_G}
  \end{subfigure}
  \hfill
  \begin{subfigure}[b]{0.45\textwidth}
    \begin{overpic}
      [width=7cm, tics=10,
      ]{sphere_Gprime.png}
      \put(87.5,37){$v_1$}
      \put(9.5,37){$v_{-1}$}        
      \put(35,16){$D_4$}
      \put(35,61){$D_3$}
      \put(23,36){$D_1$}
      \put(75,36){$D_2$}
      \put(87.5,37){$v_1$}
      \put(9.5,37){$v_{-1}$}         
      \put(50,35){$U'_1$}
      \put(18,70){$U'_2$}
      \put(50,85){$e'_3$}
      \put(50,55){$e''_3$}
      \put(50,23){$e'_4$}
      \put(50,10){$e''_4$}      
      \put(32,40){$e'_1$}
      \put(66,40){$e'_2$}
      \put(29,30){$e''_1$}
      \put(69,30){$e''_2$}            
    \end{overpic}
   \caption{}
    \label{fig:Graph_Gprime}
  \end{subfigure}
  \caption{A planar embedded graph $\sG$ (left) and its blow-up graph $\sG'$ (right).}
\label{fig:Graphs_Gs}
\end{figure}

\section{An example of a critically fixed rational map}\label{sec:example}

\begin{figure}[t]
  \centering
  \begin{subfigure}[b]{0.45\textwidth}
    \begin{overpic}
      [width=7cm, tics=10,
      ]{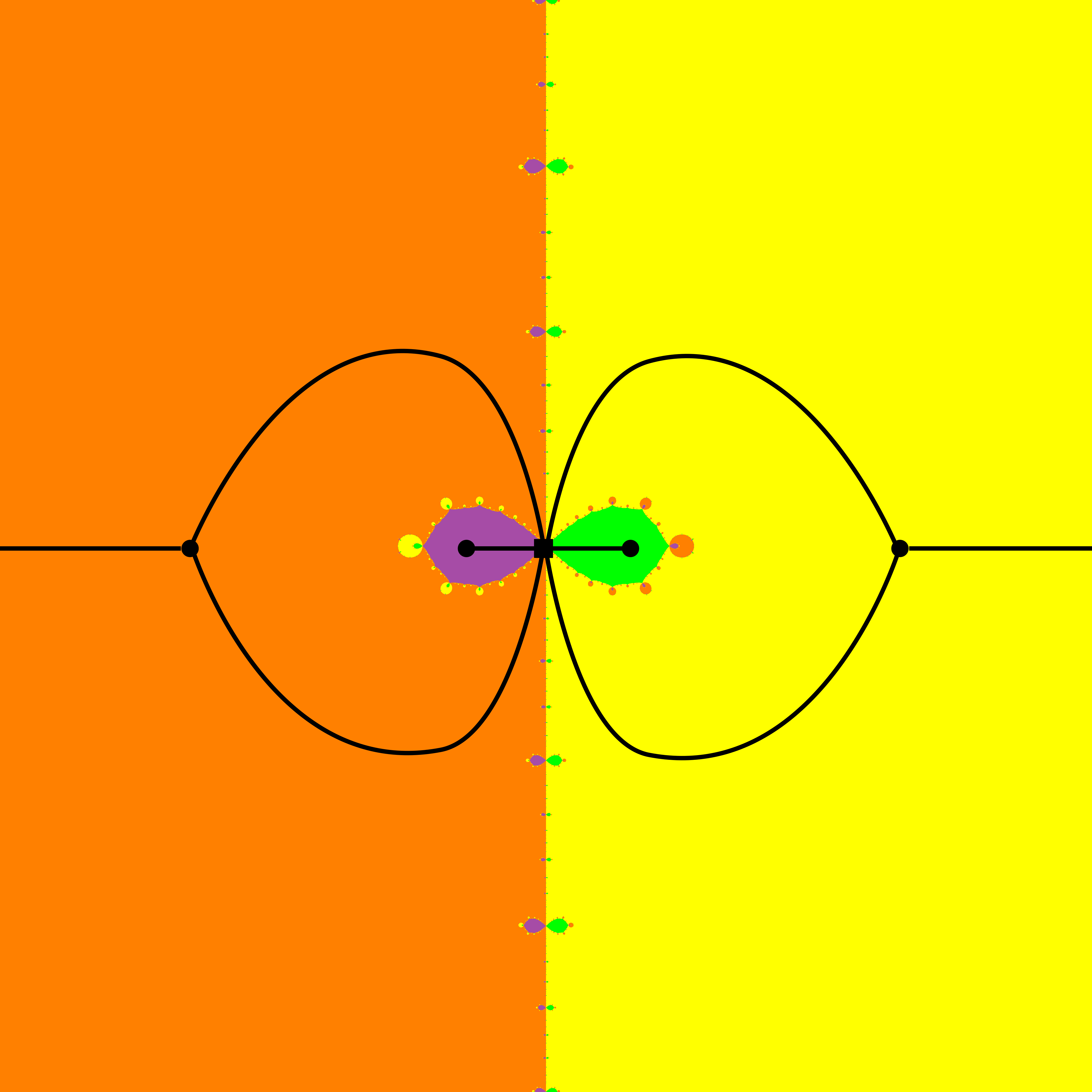}
      \put(83,45){$1$}
      \put(11.5,45){$-1$}     
      \put(40,50.5){$-c$}
      \put(55.5,50.5){$c$}   
       \put(48.75,41){$0$}     
      \put(28,48){$Q_1$}
      \put(68,48){$Q_2$}
      \put(48,72){$Q_3$}
      \put(48,22){$Q_4$}      
    \end{overpic}
\caption{}
    \label{fig:Tisch_f}
  \end{subfigure}
  \hfill
  \begin{subfigure}[b]{0.45\textwidth}
    \begin{overpic}
      [width=7cm, tics=10,
      ]{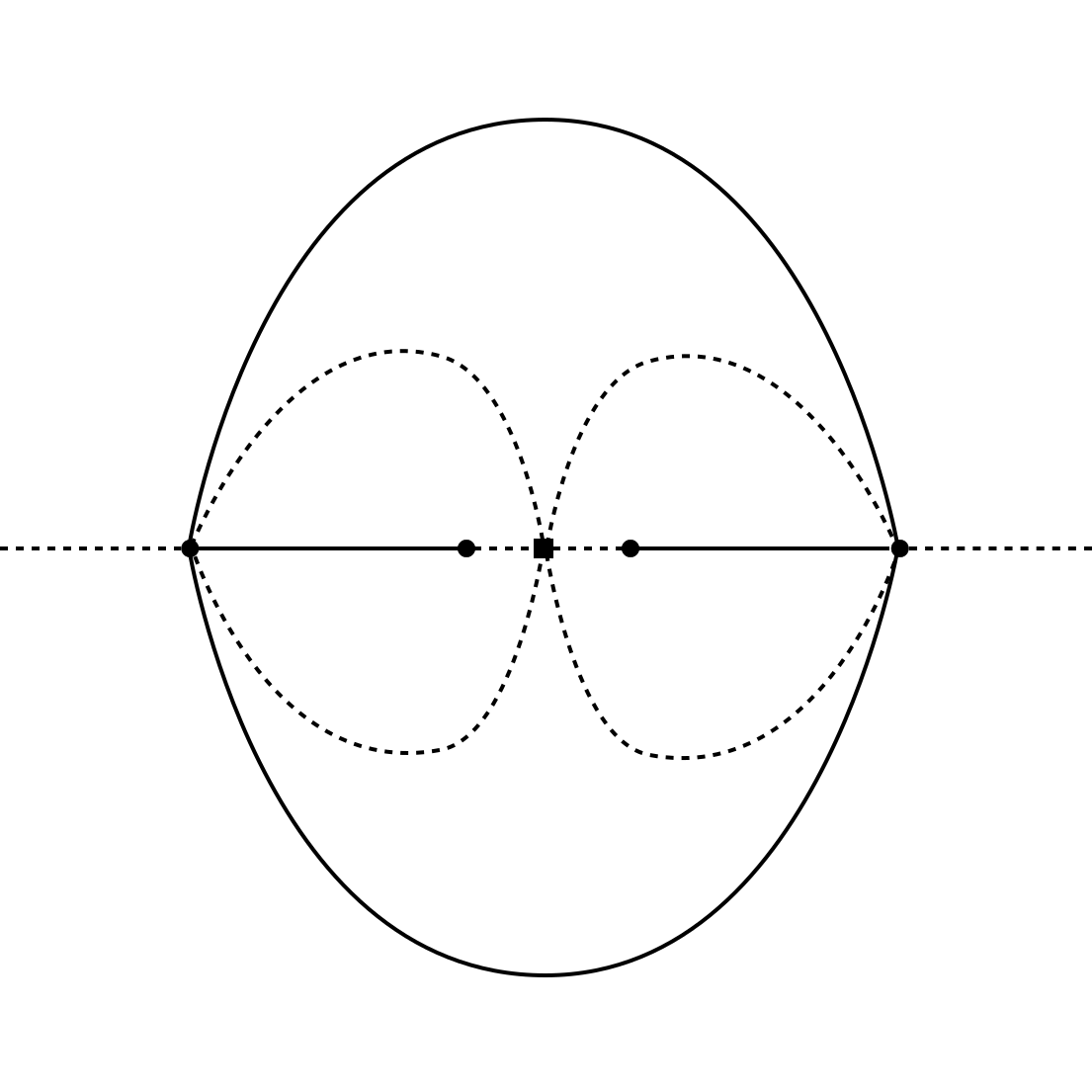}
      \put(83,45){$1$}
      \put(11.5,45){$-1$}     
      \put(40,46){$-c$}
      \put(55.5, 46){$c$}   
      \put(48.75,41){$0$}     
      \put(24,52){$e(Q_1)$}
      \put(62,52){$e(Q_2)$}
      \put(43,83){$e(Q_3)$}
      \put(43,13){$e(Q_4)$}         
    \end{overpic}
   \caption{}
    \label{fig:Charge_f}
  \end{subfigure}
  \caption{The Tischler graph $\Tisch(f)$ (left) and a charge graph $\Charge(f)$ (right) of the map $f$ from Section \ref{sec:example}.}
\label{fig:Tisch_and_Charge}
\end{figure}

In this section, we introduce a specific critically fixed rational map that we will use throughout the article to illustrate various constructions and phenomena.  Other explicit examples of critically fixed rational maps can be found in \cite[\S 11]{Pilgrim_crit_fixed}.

Consider the rational map $f\colon \CDach \to \CDach$ of degree $5$ given by
\begin{equation}\label{eq:f_G}
f(z)=\frac{z}{2} \,\frac{(5- \sqrt{5})z^4+ 10 (\sqrt{5} - 1) z^2 - 5 (7 - 3 \sqrt{5})}{5z^4 + 10 (\sqrt{5} - 2) z^2 +(2 \sqrt{5}-5)}, \quad z\in \CDach.
\end{equation}
The sets of critical and fixed points of $f$ are $$C_f=\{-1,\,-c,\,c, \,1\} \text{ and } \fix(f)=\{-1,\,-c,\,0,\,c,\, 1,\, \infty\},$$ respectively, where $c=\sqrt{5}-2$. 
In particular,  the map $f$ is critically fixed. The critical points $-1$ and $1$ have multiplicity~$3$, while the critical points $-c$ and $c$ have multiplicity~$1$.

The Tischler graph of $f$ is illustrated in Figure \ref{fig:Tisch_f} (note that $\Tisch(f)$ has a vertex at $\infty$ connected to the critical points $-1$ and $1$).  In the same figure,  the basins of attraction of the critical points $-1$, $-c$, $c$, and $1$ are shown in orange, purple, green, and yellow color, respectively. Note that $\Tisch(f)$ is connected and bipartite (with parts $\{-1,\,-c,\,c, \,1\}$ and $\{0,\infty\}$); its vertex set coincides with $\fix(f)$; and each of its faces, denoted by $Q_1$, $Q_2$, $Q_3$, $Q_4$, is bounded by a circuit of length $4$.  The faces $Q_1$ and $Q_2$ are bigons with a sticker inside, while the faces $Q_3$ and $Q_4$ are quadrilaterals.

\section{Connectivity of Tischler graphs}\label{sec:proof-main}

Let  $f\colon\CDach\to\CDach$ be a critically fixed rational map of degree $d\geq 2$. Suppose that $\sT:=\Tisch(f)$ is the Tischler graph of $f$ with the vertex set $V$, the edge set $E$, and the face set $F$.  In this section, we show that $\sT$ is a connected graph, that is, prove Theorem \ref{thm:Tisch-connected}.

It follows from the definition that $\sT$ is a bipartite graph with parts $C_f$ and $R_f$, where $R_f$ is the set of landing points of the fixed internal rays. Since each point $v\in R_f$ is fixed under $f$, we have $V=C_f\sqcup R_f\subset \fix(f)$ and 
\begin{equation}\label{eq:fix-Tisch}
\# V \leq \#\fix(f)=d+1.
\end{equation}
Note that the degree $\deg_\sT(c)$ of a critical point $c\in C_f$ in the graph $\sT$ coincides with the multiplicity of $c$. So, $\deg_\sT(c)= d_c -1$, where $d_c$ denotes the local degree of $f$ at the critical point $c$. Now, the Riemann-Hurwitz formula implies that
\begin{equation}\label{eq:RH-Tisch} 
2d-2 = \sum_{c\in C_f} (d_c-1) = \sum_{c\in C_f} \deg_\sT(c) =   \# E,
\end{equation}
where the last equality follows from the fact that $\sT$ is a bipartite graph with $C_f$ being one of its parts.

\begin{claim1}
If a pair of distinct edges $e_1,e_2\in E$ forms a circuit of length $2$ in $\sT$ (that is, $e_1\cup e_2$ is a Jordan curve in $\CDach$), then each of the two complementary components of  $e_1\cup e_2$ in $\CDach$ contains a critical point of $f$.
\end{claim1}
\begin{proof}
Let $e_1,e_2\in E$ be two distinct edges of $\sT$ that join a critical fixed point $c\in C_f$ and a (repelling) fixed point $r\in R_f$. Suppose also that $e_1$ and $e_2$ correspond to the fixed internal rays of angles $\theta_1$ and $\theta_2$ in the immediate basin $\Omega_c$ of $c$. Let $U$ be a complementary component of the Jordan curve $e_1\cup e_2$. 
We argue by contradiction and assume that the statement is false,  that is, $U\cap C_f=\emptyset$.

Since $U$ is a Jordan domain and, by our assumption, $\overline{U}\cap f(C_f) = \overline{U} \cap C_f = \{c\}$, 
 it follows that each connected component $U'$ of $f^{-1}(U)$ is a Jordan domain and $f|\overline{U'}\colon \overline{U'} \to \overline{U}$ is a homeomorphism \cite[Proposition 2.8]{PilgrimJordan}. Since $f$ is injective in a neighborhood of $r$ and the edges $e_1$ and $e_2$ are fixed under $f$,  we conclude that $U$ is a connected component of $f^{-1}(U)$. However, this is not possible, because $f|\overline{U}$ is not injective. Indeed, one of the two internal rays of angles $\theta_1\pm\frac{1}{d_c}$ in $\Omega_c$ belongs to $\overline{U}$. At the same time, both of these rays and the edge $e_1$ are mapped by $f$ onto $e_1$, so $f|\overline{U}$ is not injective. This gives the
desired contradiction.
\end{proof}

\begin{claim2}
$\#E\geq 2\#F$.
\end{claim2}
\begin{proof}

Let $Q$ be an arbitrary face of the planar embedded graph $\sT$. Since $\sT$ is bipartite, each connected component of the boundary $\partial Q$ is traced by a circuit $(e_0, e_1, \dots, e_{n-1})$ of even length $n\geq 2$. Claim~1 implies that $Q$ cannot be bounded by just one circuit of length~$2$. Consequently, the total length of circuits that bound $Q$, which we simply denote by $\# \partial Q$, is at least~$4$.  Note also that each edge $e\in E$ appears exactly twice among the circuits tracing the boundary components of the faces of~$\sT$.  It follows that
\[2\#E=\sum_{Q\in F} \#\partial Q \geq 4 \#F,\]
which gives the desired inequality.
\end{proof}

Denote by $\kappa$ the number of connected components of the graph $\sT$.  Euler’s formula implies that
\begin{equation}\label{eq:Euler-Tisch} 
\#V-\#E+\#F=\kappa+1.
\end{equation}
Using Claim~2 and substituting \eqref{eq:fix-Tisch} and \eqref{eq:RH-Tisch} into \eqref{eq:Euler-Tisch}, we get
\begin{equation}\label{eq:tish-equation}
\kappa+1=\#V-\#E+\#F\leq \#V-\frac{1}{2}\#E \leq (d+1)-\frac{1}{2}(2d-2)=2.
\end{equation}
Consequently, $\kappa=1$, that is, the Tischler graph of $f$ is connected. This finishes the proof of Theorem \ref{thm:Tisch-connected}.\\

The argument above shows that all inequalities in \eqref{eq:tish-equation} are in fact equalities. This implies the following statement.

\begin{cor}\label{cor:Tisch-structure}
Let $f$ be a critically fixed rational map with $\deg(f)\geq 2$.  Then the
following statements are true:
\begin{enumerate}[label=\text{(\roman*)},font=\normalfont]
\item\label{cor:i} The vertex set of the Tischler graph $\Tisch(f)$ coincides with the set $\fix(f)$ of fixed points of~$f$.  In particular, each fixed point of $f$ lies on a fixed internal ray.
\item\label{cor:ii}  The boundary $\partial Q$ of each face $Q$ of $\Tisch(f)$ is connected and traced by a circuit of length $4$. That is, each face is either a quadrilateral or a bigon with a sticker inside.
\item\label{cor:iii}  The number of faces of $\Tisch(f)$ equals $\deg(f) - 1$.
\end{enumerate}
\end{cor}

\section{Combinatorial classification of critically fixed rational maps}\label{sec:classification}

The goal of this section is to prove Theorem \ref{thm:classif_cr_fixed}. Namely, we will construct a canonical bijection $$\Phi\colon \operatorname{ConPlanGr}_n\to \operatorname{CritFixRat}_{n+1}$$ between the set $\operatorname{ConPlanGr}_n$ of isomorphism classes of connected planar embedded graphs with exactly $n$ edges (and without loops) and the set $\operatorname{CritFixRat}_{n+1}$ of conformal conjugacy classes of critically fixed rational maps of degree $n+1$.  Theorem \ref{thm:classif_cr_fixed} will then immediately follow. To describe the map $\Phi$, we first need to introduce some auxiliary constructions.

\subsection{From graphs to maps}

Let $\sG=(V,E)$ be a connected planar embedded graph with exactly $n$ edges. 
We now define a (topological) branched covering map $\widetilde{f}_\sG\colon \CDach\to\CDach$ obtained from the identity map on $\CDach$ by \emph{blowing up} each edge of the graph $\sG$ in the sense of Pilgrim and Tan Lei \cite[\S 2.5]{PT}. 
Informally, we cut the sphere $\CDach$ open along the interior of each edge and glue in a closed Jordan region ``patch'' inside each slit along the boundary.  Each complementary component of the union of the patches is then mapped by $\widetilde{f}_\sG$ homeomorphically onto the corresponding face of $\sG$. At the same time, $\widetilde{f}_\sG$ maps the interior of each Jordan region patch homeomorphically onto the complement of the respective edge. We present a formal  construction below.

Suppose that $E=\{e_1,\dots,e_n\}$ and $F=\{U_1,\dots,U_m\}$ are the edge and face sets of the graph~$\sG$, respectively. 
For each edge $e_j\in E$, we choose a closed Jordan region $D_j\subset \CDach$ that satisfies the following properties:
\begin{enumerate}[font=\normalfont, label=(B\arabic*)]
\item\label{B1} The endpoints of $e_j$ lie on $\partial D_j$ and the interior of $e_j$ is in $D_j ^\circ$.  In particular,  the endpoints of $e_j$ split $\partial D_j$ into two Jordan arcs, which we denote by $e'_j$ and $e''_j$.
\item\label{B2} Distinct closed Jordan regions $D_{j_1}$ and $D_{j_2}$ intersect only at the common vertices (if any) of the edges $e_{j_1}$ and $e_{j_2}$, that is, $D_{j_1}\cap D_{j_2}= e_{j_1}\cap e_{j_2}$ for $j_1\neq j_2$.
\end{enumerate}

Note that the two properties above and \cite[Theorem A.6(ii)]{BuserGeometry} immediately imply that
\begin{enumerate}[font=\normalfont, label=(B\arabic*)]
\setcounter{enumi}{2} 
\item\label{B3} The arcs $e_j'$ and $e''_j$ are isotopic to $e_j$ relative to the vertex set $V$ for all $j=1,\dots,n$.
\end{enumerate} 
 
Set $E':=\bigcup_{j=1}^{n}\{e'_j,e''_j\}$. Clearly, $\sG':=(V,E')$ is a connected planar embedded graph, which we call a \emph{blow-up} of $\sG$.  By construction,  the interior $\interior{D_j}$ is a face of $\sG'$ for each $j=1,\dots, n$.  Moreover,  each face $U_k$, $k=1,\dots,m$, of $\sG$ contains a unique face of $\sG'$, which we denote by $U'_k$.  Note that then $$\CDach=\bigcup_{j=1}^n D_j \cup \bigcup_{k=1}^m U'_k.$$

\begin{example} A blow-up of the graph $\sG$ from Figure \ref{fig:Graph_G} is shown in Figure \ref{fig:Graph_Gprime}. The regions in dark gray color in Figure \ref{fig:Graph_Gprime} correspond to the Jordan region patches.
\end{example}

The map $\widetilde{f}_\sG\colon \CDach\to\CDach$ can now be described as follows.  First, we define $\widetilde{f}_\sG$ on the realization~$\GC'$ of $\sG'$ so that 
\begin{enumerate}[font=\normalfont, label=(B\arabic*)]
\setcounter{enumi}{3} 
\item\label{B4} $\widetilde{f}_\sG(v)=v$ for each $v\in V$;
\item\label{B5} $\widetilde{f}_\sG|e'_j\colon  e'_j\to e_j$ and $\widetilde{f}_\sG|e''_j\colon e''_j\to e_j$ are homeomorphisms for each $j=1,\dots,n$.
\end{enumerate}
Note that the image $\widetilde{f}_\sG(\GC')$ is the realization $\GC$ of $\sG$. Then, we continuously extend $\widetilde{f}_\sG$ to the whole sphere $\CDach$ so that
\begin{enumerate}[font=\normalfont, label=(B\arabic*)]
\setcounter{enumi}{5} 
\item\label{B6} $\widetilde{f}_\sG|U'_k:U'_k\to U_k$ is an orientation-preserving homeomorphism for all $k=1,\dots,m$; 
\item\label{B7}$\widetilde{f}_\sG|\interior{D_j}:\interior{D_j}\to\CDach\setminus e_j$ is an orientation-preserving homeomorphism for all $j=1,\dots,n$. 
\end{enumerate}

\begin{figure}
  \centering
  \begin{subfigure}[b]{0.45\textwidth}
    \begin{overpic}
      [width=7cm, tics=10,
      ]{sphere_G_map.png}
      \put(87.5,37){$v_1$}
      \put(9.5,37){$v_{-1}$}     
      \put(39.5,32){$v_{-c}$}
      \put(59,32){$v_c$}   
      \put(108,45){$\longleftarrow$}
      \put(110,50){$\widetilde{f}_\sG$}
    \end{overpic}
  \end{subfigure}
  \hfill
  \begin{subfigure}[b]{0.45\textwidth}
    \begin{overpic}
      [width=7cm, tics=10,
      ]{sphere_GpreimColor.png}
      \put(87.5,37){$v_1$}
      \put(9.5,37){$v_{-1}$}     
      \put(39.5,32){$v_{-c}$}
      \put(59,32){$v_c$}             
    \end{overpic}
  \end{subfigure}
  
  \caption{The map $\widetilde{f}_\sG$ obtained by blowing up edges of the graph $\sG$ (on the left).}
\label{fig:Blow_map}
\end{figure}

It is straightforward to check that the continuous map $\widetilde{f}_\sG\colon \CDach\to\CDach$ obtained in this way is an orientation-preserving branched covering map (this easily follows from \cite[Corollary A.14]{THEBook}). Note that the topological degree of $\widetilde{f}_\sG$ equals $\# E + 1 = n+1$ and $\widetilde{f}_\sG$ is a local homeomorphism outside $V$. Furthermore, the local degree of $\widetilde{f}_\sG$ at a vertex $v\in V$ is given by $\deg_\sG(v)+1$, and thus the set $C_{\widetilde{f}_\sG}$ of critical points of $\widetilde{f}_\sG$ equals~$V$. Condition \ref{B4} implies that each critical point of $\widetilde{f}_\sG$ is fixed,  which means that $\widetilde{f}_\sG$ is postcritically-finite. 

\begin{example}
The map $\widetilde{f}_\sG$ obtained by blowing up edges of the graph $\sG$ from Figure \ref{fig:Graph_G} is illustrated in Figure \ref{fig:Blow_map}. The graph on the right corresponds to the preimage $\widetilde{f}_\sG^{-1}(\GC)$ of the realization $\GC$ of~$\sG$. The small white discs correspond to the points from $\widetilde{f}_\sG^{-1}(V)\setminus V$, where $V=\{v_{-1},v_{-c},v_c,v_1\}$ is the vertex set of~$\sG$. Each gray and blue face on the right is mapped by $\widetilde{f}_\sG$ homeomorphically onto the face of $\sG$ of the corresponding color on the left.
\end{example}

As follows from the construction, the map $\widetilde{f}_\sG$ is not uniquely defined. However, it is uniquely determined up to the following natural equivalence relation. 

We say that two postcritically-finite branched covering maps $f,g\colon \Sp\to\Sp$ on a topological $2$-sphere $\Sp$ are \emph{combinatorially equivalent} if they commute up to an isotopy relative to the postcritical set; that is,  there exist orientation-preserving homeomorphisms $h_0, h_1\colon \Sp\to\Sp$ that are isotopic relative to $P_f$ and satisfy $h_0\circ f = g\circ h_1$.  A celebrated theorem due to William Thurston characterizes those postcritically-finite branched covering maps on $\Sp$ (with hyperbolic orbifolds) that are combinatorially equivalent to a rational map on $\CDach$ \cite{DH_Th_char}.  Moreover,  it says that if two postcritically-finite rational maps with hyperbolic orbifolds are combinatorially equivalent then they are conformally conjugate. 

It follows from \cite[Proposition 2]{PT} that the combinatorial equivalence class of $\widetilde{f}_\sG$ depends only on the isomorphism class of $\sG$.  Since $\sG$ is connected, \cite[Corollary 3]{PT} implies that $\widetilde{f}_\sG$ is combinatorially equivalent to a critically fixed rational map $f_\sG\colon\CDach\to\CDach$ with $\deg(f_\sG)=n+1$.  By the rigidity portion of Thurston's theorem,  the map $f_\sG$ is uniquely defined up to conformal conjugacy.  

Thus we obtain a map $\Phi\colon \operatorname{ConPlanGr}_n\to \operatorname{CritFixRat}_{n+1}$ induced by the blow-up construction: $\Phi$ sends the isomorphism class of the connected planar embedded graph $\sG$ to the conformal conjugacy class of the rational map $f_\sG$.   The injectivity of the map $\Phi$ follows from \cite[Theorem 1.3]{Pilgrim_crit_fixed}.  To prove Theorem \ref{thm:classif_cr_fixed} it remains to show that $\Phi$ is surjective. We do this in the remainder of this section.

\subsection{From maps to graphs}

In the following, we assume that $f\colon\CDach\to\CDach$ is a critically fixed rational map with $\deg(f)\geq 2$. Also, let $\sT:=\Tisch(f)$ be the Tischler graph of $f$ with the vertex, edge, and face sets denoted by $V_\sT$, $E_\sT$, and $F_\sT$, respectively. As before, $R_f$ is the set of landing points of the fixed internal rays of $f$.  By Corollary \ref{cor:Tisch-structure},  $V_\sT=C_f\sqcup R_f=\fix(f)$ and $\#F_\sT = \deg(f)-1$.  Moreover,  each face of $\sT$ is simply connected and has exactly two critical points on the boundary.

For each face $Q$ of $\sT$, we choose a Jordan arc $e(Q)$ joining the two critical points of $f$ on $\partial Q$ so that $\inter(e(Q))\subset Q$.  Note that $e(Q)$ subdivides the face $Q$ into two open triangular domains with three fixed points on the boundary (one from $R_f$ and two from $C_f$).  Let $H_\sT$ be the set of all such domains, which we  call \emph{half-faces} of $\sT$.  In other words,  $H_\sT$ is the set of complementary components of the union 
$$\bigcup_{e\in E_\sT}e\cup\bigcup_{Q\in F_\sT} e(Q).$$ 

\begin{definition*}
The planar embedded graph with the vertex set $C_f$ and the edge set $\{e(Q): Q\in F_\sT\}$ is called a \emph{charge graph} of $f$ and denoted by $\Charge(f)$. 
\end{definition*}

\begin{example}
A charge graph of the map $f$ from Section \ref{sec:example} is shown in solid lines in Figure \ref{fig:Charge_f}.  Here,  the dashed arcs represent the edges of the Tischler graph of $f$ from Figure \ref{fig:Tisch_f}. Note that $\Charge(f)$ is isomorphic to the graph $\sG$ from Figure \ref{fig:Graph_G}.
\end{example}

We deduce now several properties of the charge graph.

\begin{lemma}\label{lem:faces-of-charge}
The following statements are true for the charge graph $\sG:=\Charge(f)$:
\begin{enumerate}[label=\text{(\roman*)},font=\normalfont]
\item\label{charge:i} The graph $\sG$ is connected. 
\item\label{charge:ii} Each face of $\sG$ contains exactly one point from $R_f$. 
\item\label{charge:iii} If $U$ is the face of $\sG$ containing a point $r\in R_f$, then
\begin{equation}\label{eq:face}
U=  \bigcup_{\substack{W\in H_\sT,\\ \text{s.t. $r\in \partial W$}}} \left(\overline{W}\setminus e(W)\right),
\end{equation}
where $e(W)$ denotes the unique edge of $\sG$ that belongs to the boundary $\partial W$ of a half-face~$W\in H_\sT$.
\end{enumerate}
\end{lemma}

\begin{proof} Let $U$ be a face of $\sG$ and $e(Q)$ be an edge of $\sG$ on $\partial U$.  By construction,  $U$ must contain the unique point $r\in R_f\cap \partial Q$. 

Suppose now that $e_0,e_1,\dots,e_{k-1}$ are the edges of $\sT$ incident to the vertex $r$ and labeled in cyclic order. Let $Q_0,Q_1,\dots, Q_{k-1}$ and $W_0,W_1,\dots, W_{k-1}$ be the faces and half-faces of $\sT$ surrounding the vertex $r$ 
and labeled so that $e_j, e_{j+1} \subset \partial Q_j$ and $W_j\subset Q_j$ for each $j=0,\dots,k-1$ (where $e_k=e_0$). Then every $W_j$ is an open Jordan domain with $\partial W_j= e_j\cup e_{j+1}\cup e(Q_j)$. 
It follows that $\left(e(Q_0),e(Q_1),\dots, e(Q_{k-1})\right)$ is a circuit in $\sG$ that traces the boundary of the face~$U$. In fact, 
$$U= \bigcup_{j=0}^{k-1}\left(\overline{W_j}\setminus e(Q_j)\right),$$
which completes the proof of parts \ref{charge:ii} and \ref{charge:iii}.

To prove \ref{charge:i},  note that $\sG$ has exactly $\#C_f$ vertices,  $\#F_\sT= \deg(f)-1$ edges,  and $\#R_f$ faces (the latter follows from \ref{charge:ii}).  Hence $\sG$ is connected by Euler's formula as $\#C_f+\# R_f=\# \fix(f)=\deg(f)+1$.  This finishes the proof of the lemma.
\end{proof}

The next proposition implies that $\Phi\colon \operatorname{ConPlanGr}_{n}\to \operatorname{CritFixRat}_{n+1}$ is surjective, which completes the proof of Theorem \ref{thm:classif_cr_fixed}.

\begin{prop}\label{prop:diagonal-blow}
Each critically fixed rational map $f$ with $\deg(f)\geq 2$ may be obtained from a charge graph of $f$ by blowing up its edges. That is,  there is a charge graph $\sG=\Charge(f)$ with the vertex set $V=C_f$ and edge set $E=\{e_1,\dots, e_n\}$, as well as closed Jordan regions $D_1,\dots, D_n$, 
that together satisfy conditions~{\normalfont \ref{B1}--\ref{B7}} with $\widetilde{f}_\sG= f$.
\end{prop}

\begin{remark} Since the charge graph of the map $f$ from Section \ref{sec:example} is isomorphic to the graph $\sG$ from Figure \ref{fig:Graph_G}, Proposition \ref{prop:diagonal-blow} implies that the map $f$ and the map $\widetilde{f}_\sG$ from Figure \ref{fig:Blow_map}, obtained by blowing up the edges of $\sG$, are combinatorially equivalent.
\end{remark}

\begin{proof}[Proof of Proposition \ref{prop:diagonal-blow}] As before,  let $\sT:=\Tisch(f)$ be the Tischler graph of the given critically fixed rational map $f$ with the vertex set $V_\sT=C_f\sqcup R_f$ and edge set $E_\sT$.  Suppose also that $F_\sT=\{Q_1,\dots, Q_n\}$ is the face set of $\sT$.  First, we select the edges $e(Q_1),\dots, e(Q_n)$ of a charge graph of $f$ as follows.

Let $Q_j$, $j=1,\dots, n$, be an arbitrary face of the Tischler graph $\sT$. By Corollary~\ref{cor:Tisch-structure}\ref{cor:ii}, $\partial Q_j$ is traced by a circuit 
$$c_{j,1},\, e_{j,1}, \, r_{j,1}, \, e_{j,2}, \, c_{j,2}, \, e_{j,3},\, r_{j,2},\, e_{j,4},\, c_{j,1},$$
where $e_{j,1}, \, e_{j,2}, \, e_{j,3}, \, e_{j,4}\in E_\sT$, $c_{j,1},\, c_{j,2} \in C_f$, and $r_{j,1},\, r_{j,2}\in R_f$.  
Since the edges $e_{j,1}, \, e_{j,2}, \, e_{j,3}, \, e_{j,4}$ are fixed under $f$ and $f$ is locally injective near $r_{j,1}$ and $r_{j,2}$, there are exactly two connected components $Q'_j$ and $Q''_j$ of $f^{-1}(Q_j)$ such that $e_{j,1}, \, e_{j,2} \subset \partial Q'_j$ and  $e_{j,3}, \, e_{j,4} \subset \partial Q''_j$.  Note that components $Q'_j$ and $Q''_j$ are disjoint and both belong to $Q_j$.  Moreover, the boundaries $\partial Q'_j$ and $\partial Q''_j$ are composed of internal rays at $c_{j,1}$ and $c_{j,2}$.  We may now choose the edge $e_j:=e(Q_j)$ so that $\inter(e_j) \subset Q_j\setminus (Q'_j \cup Q''_j)$; see Figure \ref{fig:Face}. The figure shows two possible configurations: when the face $Q_j$ is a quadrilateral (on the left), and when it is a bigon with a sticker inside (on the right).  In the figure, the components $Q'_j$ and $Q''_j$ of $f^{-1}(Q_j)$ are colored gray,  and the small white squares correspond to the preimages of $r_{j,1}$ and $r_{j,2}$ on $\partial Q'_j$ and $\partial Q''_j$.   The solid red arc connecting $c_{j,1}$ and $c_{j,2}$ represents the chosen edge $e_j$.

Now we construct the desired closed Jordan region $D_j$.  Set 
$$e'_j:=\overline{Q'_j\cap f^{-1}(e_j)} \text{ and } e''_j:=\overline{Q''_j\cap f^{-1}(e_j)}.$$
Since $Q_j\cap C_f = \emptyset$,  the maps $f|Q'_j\colon Q'_j\to Q_j$ and $f|Q''_j\colon Q''_j\to Q_j$ are homeomorphisms.  It follows that $e'_j$ and $e''_j$ are Jordan arcs joining $c_{j,1}$ and $c_{j,2}$.  In Figure \ref{fig:Face},  these arcs are represented by the dashed red curves.   
Let $D_j$ be the closed Jordan region bounded by the Jordan curve $e'_j \cup e''_j$ such that the interior $\interior{D_j}\subset Q_j$.

\begin{figure}
  \centering
  \begin{subfigure}[b]{0.45\textwidth}
    \begin{overpic}
      [width=7.0cm, tics=10, trim=0 0 0 50, clip,
      ]{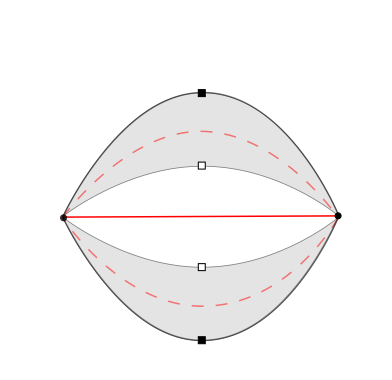}
      \put(87,37){$c_{j,2}$}
      \put(10,37){$c_{j,1}$}     
      \put(49,5){$r_{j,2}$}
      \put(40,13.5){$W''_{j,2}$}   
      \put(49,77){$r_{j,1}$}
      \put(40,66){$W'_{j,1}$}   
      \put(22,64){$e_{j,1}$}
      \put(74,64){$e_{j,2}$}       
      \put(22,18){$e_{j,4}$}
      \put(74,18){$e_{j,3}$}           
      \put(60,21){$e''_{j}$} 
      \put(60,59){$e'_{j}$}     
      \put(60,38){\textcolor{red}{$e_{j}$}}               
    \end{overpic}
  \end{subfigure}
  \hfill
  \begin{subfigure}[b]{0.45\textwidth}
    \begin{overpic}
      [width=7.0cm, tics=10, trim=0 0 0 50, clip,
      ]{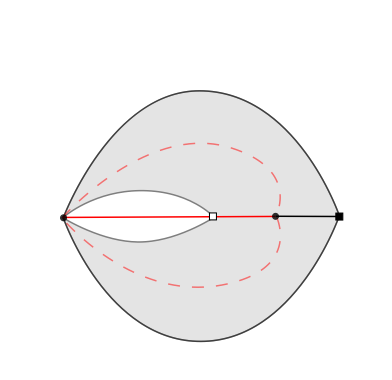}
      \put(87,37){$r_{j,2}$}
      \put(9,37){$c_{j,1}$}     
      \put(49,5){$e_{j,4}$}
      \put(40,13.5){$W''_{j,2}$}   
      \put(40,66){$W'_{j,1}$}         
      \put(49,77){$e_{j,1}$} 
      \put(74,38){$e_{j,3}$}   
      \put(74,44){$e_{j,2}$}  
      \put(87,45){$r_{j,1}$}                    
      \put(58,25){$e''_{j}$} 
      \put(58,57){$e'_{j}$}   
      \put(35,38){\textcolor{red}{$e_{j}$}}         
      \put(64,37){$c_{j,2}$}               
    \end{overpic}
  \end{subfigure}
  \caption{Constructing the edge $e_j=e(Q_j)$ and the closed Jordan region $D_j$ inside a face $Q_j$ of the Tischler graph when $Q_j$ is a quadrilateral (left) and a bigon with a sticker inside (right).}
\label{fig:Face}
\end{figure}

Suppose now that $\sG$ is the charge graph of $f$ with the vertex set $V=C_f$ and edge set $E=\{e_1,\dots, e_n\}$.  Let $F=\{U_1,\dots,U_m\}$ be the face set of $\sG$. Up to relabeling, by Lemma~\ref{lem:faces-of-charge}\ref{charge:ii} we may assume that $R_f=\{r_1,\dots, r_m\}$ and $r_k\in U_k$ for each $k=1,\dots,m$. 

We claim that the chosen charge graph $\sG$ and closed Jordan regions $D_1,\dots, D_n$ satisfy conditions \ref{B1}--\ref{B7} with $\widetilde{f}_\sG= f$, so that $f$ is obtained from $\sG$ by blowing up its edges.  Conditions \ref{B1}--\ref{B5} follow immediately from the construction.  Checking the remaining two conditions is also straightforward,  and we do this below for completeness.

Let $\sG':=(V,E')$ be the planar embedded graph with the vertex set $V=C_f$ and the edge set $E':=\bigcup_{j=1}^{n}\{e'_j,e''_j\}$. For each $j=1,\dots, n$, let $W_{j,1}$ and $W_{j,2}$ be the two half-faces of $\sT$ inside the face $Q_j$ labeled so that 
$$\text{$\partial W_{j,1}=e_{j,1}\cup e_{j,2}\cup e_j$ and  $\partial W_{j,2}=e_{j,3}\cup e_{j,4}\cup e_j$}.$$
If we set $W'_{j,1}:= (f|Q'_j)^{-1}(W_{j,1})$ and $W''_{j,2}:= (f|Q''_j)^{-1}(W_{j,2})$, then 
$$Q_j = W'_{j,1} \sqcup \inter(e'_j) \sqcup \interior{D_j} \sqcup \inter(e''_j) \sqcup W''_{j,2};$$
see Figure \ref{fig:Face}. For every $k=1,\dots,m$, let
\begin{equation}\label{eq:face-preim}
U'_k:=  \bigcup_{\substack{W\in H'_\sT,\\ \text{s.t. $r_k\in \partial W$}}} \left(\overline{W}\setminus e(W)\right),
\end{equation}
where $H'_T:=\bigcup_{j=1}^{n}\{W'_{j,1}, W''_{j,2}\}$ and $e(W)$ denotes the unique edge of $\sG'$ that belongs to the boundary of $W\in H'_T$.  An argument similar to the proof of Lemma \ref{lem:faces-of-charge} shows that $U'_k$ is the face of $\sG'$ containing $r_k$.  It follows that
the set $F'$ of faces of $\sG'$ is given by
$$F'=\{\interior{D_1},\dots, \interior{D_n}\}\sqcup \{U'_1,\dots, U'_m\}.$$
Since $f|W'_{j,1}\colon W'_{j,1}\to W_{j,1}$ and $f|W''_{j,2}\colon W''_{j,2}\to W_{j,2}$ are homeomorphisms for all $j=1,\dots, n$, and $f$ is locally injective near $r_k$ for each $k=1,\dots, m$,  equations \eqref{eq:face} and \eqref{eq:face-preim} imply that $f|U'_k\colon U'_k\to U_k$ is an orientation-preserving homeomorphism for every $k=1,\dots, m$. So condition \ref{B6} is satisfied.

Finally, we check \ref{B7} using a counting argument.  Let $p$ be an arbitrary point in the complement of the realization of $\sG$, say $p\in U_1$.  From the considerations above, all preimages of $p$ under $f$ lie in $U'_1 \cup \interior{D_1}\cup \dots\cup \interior{D_n}$.  In particular,  $\#(f^{-1}(p)\cap U_1')=1$.  At the same time, since $f$ sends $e'_j$ and $e''_j $ homeomorphically to $e_j$,  we have $\#(f^{-1}(p)\cap\interior{D_j})\geq1$ for all  $j=1,\dots, n$.  

By Corollary \ref{cor:Tisch-structure}\ref{cor:iii}, $n=\# F_\sT= \deg(f)-1$. Thus
$$n+1 = \deg(f)= \# f^{-1}(p) = 1 + \sum_{j=1}^{n}  \#(f^{-1}(p)\cap\interior{D_j}).$$
This implies that
 $\#(f^{-1}(p)\cap\interior{D_j})=1$ for all  $j=1,\dots, n$.  It follows that every $f|\interior{D_j}:\interior{D_j}\to \CDach\setminus e_j$ is an (orientation-preserving) homeomorphism,  which establishes condition \ref{B7}. 

We checked all the conditions necessary to conclude that $f$ is obtained from $\sG$ by blowing up its edges, so Proposition \ref{prop:diagonal-blow} is proven.
\end{proof}

\section{The global curve attractor problem}\label{sec:global-curve-attractor}

Here, we provide a positive answer to the global curve attractor problem for critically fixed rational maps, that is, prove Theorem \ref{thm:finite-curve-attractor}. First, we set up some notation.

Let $f\colon\CDach\to\CDach$ be a critically fixed rational map with $\deg(f)\geq 2$ and $\sG:=\Charge(f)$ be a charge graph of $f$ defined in the previous section. 
Suppose that $E=\{e_1,\dots,e_n\}$ is the edge set of $\sG$. By Proposition \ref{prop:diagonal-blow}, we may assume that $f$ is obtained from $\sG$ by blowing up its edges.

Suppose that $\mathscr{C}(f)$ is the set of all (unoriented) simple closed curves in $\CDach\setminus C_f=\CDach\setminus P_f$. Recall that a pullback of a curve $\gamma\in \mathscr{C}(f)$ under $f$ is a connected component of the preimage $f^{-1}(\gamma)$.  
We denote the isotopy class of a curve $\gamma\in \mathscr{C}(f)$ relative to $C_f$ by $[\gamma]$. 

Given a curve $\gamma\in\mathscr{C}(f)$ and a Jordan arc $e$ in $\CDach$ with endpoints in $C_f$, we define the (\emph{geometric}) \emph{intersection number between $\gamma$ and $e$}, denoted $\gamma \cdot e$, by
$$\gamma \cdot e := \min_{\gamma'\in [\gamma]} \#(\gamma' \cap e).$$

Similarly, we define the (\emph{geometric}) \emph{intersection number between $\gamma$ and the charge graph $\sG$}, denoted $\gamma \cdot \sG$, by
$$\gamma \cdot \sG := \min_{\gamma'\in [\gamma]} \#(\gamma' \cap \GC),$$
where $\GC$ is the realization of $\sG$.  The bigon criterion (see \cite[Sections 1.2.4 and 1.2.7]{FarbMargalit}) implies that 
$$\gamma \cdot \sG = \sum_{j=1}^n \gamma \cdot e_j.$$
We will refer to the intersection number $\gamma \cdot \sG$ as the \emph{complexity} of the curve $\gamma$ (relative to the charge graph $\sG$). 

The following key lemma relates the complexities of a curve $\gamma\in \mathscr{C}(f)$ and its pullbacks.

\begin{lemma}\label{lem:complexity-decreases}
Let $\gamma\in \mathscr{C}(f)$ be arbitrary and $\Delta(\gamma)$ be the set of all pullbacks of $\gamma$ under $f$. Then 
\begin{equation}\label{eq:contraction}
\sum_{\delta \in \Delta(\gamma)} \delta\cdot \sG \leq \gamma \cdot \sG ,
\end{equation}
that is, the total complexity of the pullbacks of $\gamma$ does not exceed the complexity of $\gamma$.

Moreover, if $\gamma \cdot e_j > 1$ for some edge $e_j\in E$, then 
\begin{equation}\label{eq:strict-contraction}
\sum_{\delta \in \Delta(\gamma)} \delta\cdot \sG < \gamma \cdot \sG.
\end{equation}
\end{lemma}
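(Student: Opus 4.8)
The plan is to fix a representative of $\gamma$ in minimal position with respect to the realization $\SC$ of the spanning tree $\sS$, so that $\#(\gamma\cap\SC)=\gamma\cdot\sS$ and no subarc of $\gamma$ cobounds a bigon with $\SC$. Since $\delta\cdot\sS\le\#(\delta\cap\SC)$ for any representative of a pullback $\delta$, and the pullbacks are pairwise disjoint, one gets at once
$$\sum_{\delta\in\Delta(\gamma)}\delta\cdot\sS\ \le\ \#\big(f^{-1}(\gamma)\cap\SC\big).$$
This crude bound is too weak on its own, because the right-hand side can exceed $\gamma\cdot\sS$; the real work is to produce, for each pullback, a representative whose crossings with $\SC$ inject into the crossings of $\gamma$ with $\SC$. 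To organize this I would use the structure from Proposition \ref{prop:diagonal-blow}: the complement $P:=\CDach\setminus\SC$ is an open topological disk (the complement of a tree in the sphere), and since $f$ is critically fixed its only critical values lie in $C_f\subset\SC$, so $P$ contains no critical values. Hence $f^{-1}(P)$ is a disjoint union $P_1\sqcup\cdots\sqcup P_d$ of $d$ disks, each mapped homeomorphically onto $P$ by $f$, with $f^{-1}(\SC)=\bigcup_i\partial P_i$.

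Next I would cut $\gamma$ along $\SC$. In minimal position $\gamma\cap P$ is a union of $k:=\gamma\cdot\sS$ embedded arcs $a_1,\dots,a_k$ with endpoints on $\SC$, and each $a_s$ lifts, inside each disk $P_i$, to a single arc $a_s^{\,i}=(f|P_i)^{-1}(a_s)$; the curves in $f^{-1}(\gamma)$ are obtained by threading these $dk$ arcs together across $f^{-1}(\SC)$. The key point is then local and combinatorial: an endpoint of a lifted arc $a_s^{\,i}$ lies on a preimage edge, which under $f$ is one of the two boundary arcs $e'_j,e''_j$ of a patch $D_j$ that the blow-up folds onto a single tree edge $e_j$ (Conditions \ref{B6} and \ref{B8}). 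Using that both $e'_j$ and $e''_j$ map homeomorphically onto $e_j$, while $\inter(D_j)$ maps homeomorphically onto $\CDach\setminus e_j$, I would show that each essential crossing of a reassembled pullback $\delta$ with $\SC$ can be matched to a distinct crossing of $\gamma$ with $\SC$, after isotoping $\delta$ to remove every bigon created at the folds. Summing this injection over all pullbacks yields \eqref{eq:contraction}.

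For the strict inequality \eqref{eq:strict-contraction} I would track what happens when $\gamma\cdot e_j>1$ for a tree edge $e_j$: then two arcs of $\gamma\cap P$ abut $e_j$, and because the two preimage copies $e'_j,e''_j$ of $e_j$ both bound the single patch $D_j$, their lifts cobound a disk inside $D_j$ that meets $\SC$ in a bigon. Removing this bigon strictly reduces the crossing count of the corresponding pullback without affecting the others, so the injection constructed above fails to be surjective and \eqref{eq:strict-contraction} follows.

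I expect the main obstacle to be exactly the matching step in the second paragraph: making rigorous that reassembling the lifted arcs across the folded patch boundaries creates no new essential intersections with $\SC$, and that the required bigon removals can be carried out compatibly for all pullbacks at once. This amounts to controlling the image arcs $f(e_j)$ up to isotopy relative to $C_f$ and bookkeeping how the arcs $a_s^{\,i}$ connect through $f^{-1}(\SC)$ relative to the fixed downstairs tree $\SC$. Everything else — minimal position, the disk decomposition of $f^{-1}(P)$, and the Euler-characteristic counts — should be routine.
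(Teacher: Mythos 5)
Your setup (minimal position, the blow-up structure of Proposition \ref{prop:diagonal-blow}, the decomposition of $f^{-1}(\CDach\setminus\SC)$ into disks) matches the paper's, but the step you yourself flag as the main obstacle --- matching essential crossings of the reassembled pullbacks with $\SC$ to crossings of $\gamma$ with $\SC$ via compatible bigon removals --- is a genuine gap, and it is one the paper's proof never has to face. Crossings with the tree $\SC$ upstairs are the wrong quantity to control: $\SC$ is not forward invariant (the image $f(e_j)$ of a tree edge has nothing to do with $\SC$), and $f^{-1}(\SC)$ contains, besides the patch boundaries $e'_j,e''_j$, a homeomorphic copy of $\SC\setminus e_i$ inside every patch $D_i$; so your assertion that every preimage edge is one of the arcs $e'_j,e''_j$ is false, and there is no natural correspondence between $f^{-1}(\gamma)\cap\SC$ and $\gamma\cap\SC$. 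The paper's key idea is to never count intersections with $\SC$ upstairs at all: by \ref{B2} the lifted arc $e'_j$ is isotopic to $e_j$ relative to $C_f$ (both endpoints are fixed critical points), so for every pullback $\delta$ one has $\delta\cdot e_j=\delta\cdot e'_j\le\#(\delta\cap e'_j)$; the minimum in the definition of the intersection number absorbs all isotopies, so no bigon removal and no compatibility bookkeeping are needed. Then, since $f|e'_j:e'_j\to e_j$ is a homeomorphism by \ref{B6} and the pullbacks partition $f^{-1}(\gamma)$,
\begin{equation*}
\sum_{\delta\in\Delta(\gamma)}\delta\cdot\sS\;=\;\sum_{\delta\in\Delta(\gamma)}\sum_{j=1}^n \delta\cdot e'_j\;\le\;\sum_{j=1}^n\#\bigl(f^{-1}(\gamma)\cap e'_j\bigr)\;=\;\sum_{j=1}^n\#(\gamma\cap e_j)\;=\;\gamma\cdot\sS,
\end{equation*}
which is \eqref{eq:contraction} in three lines.

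For the strict inequality \eqref{eq:strict-contraction} your sketch is also missing the essential ingredient. The bigon you want inside $D_j$ exists only if some pullback enters and leaves $D_j$ through the \emph{same} boundary arc ($e'_j$ or $e''_j$); if $\gamma$ crossed $e_j$ twice in the same direction, the corresponding lifted strand would enter through $e'_j$ and exit through $e''_j$, and no bigon with either arc is created. Ruling this out is exactly where the simplicity of $\gamma$ must be used: a simple closed curve has signed intersection number $0$ or $\pm 1$ with the arc $e_j$, so $\gamma\cdot e_j\ge 2$ forces crossings in both directions, hence two crossings of opposite sign that are consecutive along $\gamma$. The paper implements this by writing $[\gamma]$ as a cyclic word in dual generators $\gamma_1^{\pm1},\dots,\gamma_n^{\pm1}$, noting that both $\gamma_n$ and $\gamma_n^{-1}$ must occur, and lifting the intermediate block from the basepoint preimage $t_n\in D_n$ (using \ref{B8}) to get a strand that enters and exits $D_n$ through the same arc, whence $\delta\cdot e'_n\le\#(\delta\cap e'_n)-2$ and the inequality in the display above becomes strict. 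Your sketch never invokes simplicity or crossing orientations, so as written it does not establish that the claimed bigon exists.
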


\begin{figure}
  \centering
  \begin{subfigure}[b]{0.45\textwidth}
    \begin{overpic}
      [width=7cm, tics=10,
      ]{sphere_G_curve2.png}
      \put(50,10){$e_4$}
      \put(50,73){$e_3$}
      \put(26,38){$e_1$}
      \put(74,31){$e_2$}
      \put(108,45){$\longleftarrow$}
      \put(110,50){$\widetilde{f}_\sG$}
       \put(49,37){$\gamma$}
    \end{overpic}
\caption{}
    \label{fig:Curve}
  \end{subfigure}
  \hfill
  \begin{subfigure}[b]{0.45\textwidth}
    \begin{overpic}
      [width=7cm, tics=10,
      ]{sphere_CurvePreim4.png}
      \put(80,80){$\delta_1$}
      \put(52,60){$\delta_2$}     
      \put(54,29){$\delta_3$}    
      \put(42,85){$e'_3$}
      \put(42,23.5){$e'_4$}
      \put(32,42){$e'_1$}
      \put(65,42){$e'_2$}
    \end{overpic}
   \caption{}
    \label{fig:Pullbacks}
  \end{subfigure}
  \caption{The pullbacks of a curve $\gamma$ under the map $\widetilde{f}_\sG$ from Figure~\ref{fig:Blow_map}.}
\label{fig:Curve_and_Pullbacks}
\end{figure}

\begin{example}
Instead of the map $f$ from Section \ref{sec:example}, let us consider the map $\widetilde{f}_\sG$ obtained by blowing up the edges of the graph $\sG$ from Figure \ref{fig:Graph_G}  (recall that $\widetilde{f}_\sG$ is combinatorially equivalent to~$f$).  The colored arcs in Figure \ref{fig:Curve_and_Pullbacks} correspond to the edges of $\sG$ and of its blow-up: 
the arcs colored green, purple, red, and blue in Figure~\ref{fig:Pullbacks} are mapped by $\widetilde{f}_\sG$ homeomorphically to the green, purple, red, and blue edges in Figure~\ref{fig:Curve}, respectively.  Also, the small white discs in Figure~\ref{fig:Pullbacks} indicate the non-critical preimages of the critical points of $\widetilde{f}_\sG$.  Let $\gamma$ be the simple closed curve shown in black in Figure~\ref{fig:Curve}. 
Note that $\gamma \cdot e_1 =  \gamma \cdot e_2 = 1$ and $\gamma \cdot e_3 = \gamma \cdot e_4 = 2$,  and thus 
$\gamma \cdot \sG = 6$.  The curve $\gamma$ has three pullbacks under  $\widetilde{f}_\sG$, denoted by $\delta_1,\, \delta_2, \, \delta_3$ and shown in black in Figure~\ref{fig:Pullbacks}. These pullbacks satisfy  $\delta_1 \cdot \sG =0 $ and $\delta_2 \cdot \sG=\delta_3 \cdot \sG=1$. Hence 
$$\delta_1 \cdot \sG + \delta_2 \cdot \sG + \delta_3 \cdot \sG < \gamma \cdot \sG,$$
which agrees with Lemma \ref{lem:complexity-decreases} (since $\gamma \cdot e_3 =\gamma \cdot e_4 =2$).
\end{example}

\begin{proof}[Proof of Lemma \ref{lem:complexity-decreases}]
Without loss of generality,  we may assume that the given curve $\gamma\in \mathscr{C}(f)$ 
is in ``minimal position'' with the realization $\GC$ (within its isotopy class~$[\gamma]$), that is, $$\gamma\cdot \sG = \# (\gamma \cap \GC)=\sum_{j=1}^n \#(\gamma \cap e_j). $$ 
Then $\gamma\cdot e_j = \#(\gamma \cap e_j)$ for every $e_j\in E$.\\

Let $D_j$, $e'_j$, and $e''_j$ for $j\in\{1,\dots,n\}$ be as in the proof of Proposition \ref{prop:diagonal-blow}. By \ref{B5}, $f$ sends each Jordan arc $e'_j$ homeomorphically to $e_j$. Hence $\#(\gamma \cap e_j)= \#(f^{-1}(\gamma) \cap e'_j)$. At the same time, by \ref{B3}, $e'_j$ is isotopic to $e_j$ relative to $C_f$,  and thus $\delta \cdot e_j = \delta \cdot e'_j$ for all pullbacks $\delta\in\Delta(\gamma)$. Consequently,
\begin{align*}
\sum_{\delta \in \Delta(\gamma)} \delta\cdot \sG &= \sum_{\delta \in \Delta(\gamma)}\,  \sum_{j=1}^n \delta\cdot e_j =  \sum_{j=1}^n\,  \sum_{\delta \in \Delta(\gamma)}  \delta\cdot e'_j  \\ \numberthis \label{eq:complexity_proof}
&  \leq \sum_{j=1}^n \, \sum_{\delta \in \Delta(\gamma)}  \#(\delta\cap e'_j)  =  \sum_{j=1}^n 
\#(f^{-1}(\gamma)\cap e'_j)=  \sum_{j=1}^n \#(\gamma \cap e_j) = \gamma\cdot \sG,
\end{align*}
which gives the desired inequality \eqref{eq:contraction}.\\

Suppose now that $\#(\gamma\cap e_j)= \gamma\cdot e_j > 1$ for some edge $e_j\in E$.   Let $P = \{p_1, p_2, \dots, p_m\}$, $m\geq2$, be the set of intersections between $\gamma$ and $e_j$ listed from one endpoint of $e_j$ to another.  Note that all these intersections have to be transverse, because $\gamma$ and $e_j$ are in minimal position. 
If we set $P':= e_j'\cap f^{-1}(\gamma)$ and $P'':= e_j''\cap f^{-1}(\gamma)$, we may write $P' = \{p'_1, p'_2, \dots, p'_m\}$ and $P'' = \{p''_1, p''_2, \dots, p''_m\}$ where $f(p'_k) = f(p''_k) = p_k$ for each $k = 1, \dots, m$.

By \ref{B7},  $f|D_j^\circ\colon D_j^\circ\to \CDach \setminus e_j$ is a homeomorphism.  Since $\gamma$ is a simple closed curve,  it follows that $f^{-1}(\gamma)\cap D_j$ consists of exactly $m$ pairwise disjoint Jordan arcs with endpoints in $P'\cup P''$.   We claim that one of these arcs must connect two points from $P'$ or two points from $P''$.  Indeed,  let us suppose that $p'_1\in P'$ and $p''_1\in P''$ lie on the components $\beta'$ and $\beta''$ of $f^{-1}(\gamma)\cap D_j$, respectively. Note that $\beta'\neq \beta''$; for otherwise $\gamma=f(\beta')$ and $\gamma$ has only one intersection with $e_j$.  Then $\beta' \cap \beta'' = \emptyset$,  which implies that either both endpoints of $\beta'$ are in $P'$ or both endpoints of $\beta''$ are in $P''$.  Indeed,  if $\beta'$ connects $p'_1\in P'$ to a point in $P''$, then $\beta'$ disconnects $\beta''\subset D_j$ from $P'$.  Hence $\beta''$ must connect $p''_1\in P''$ to another point in $P''$.  Without loss of generality, we assume in the following that both endpoints of $\beta'$ are in $P'$.

Let $\delta$ be the pullback of $\gamma$ with $\beta'\subset \delta$.  Then $\delta$ and $e'_j$ form a bigon and
$$\delta \cdot e_j = \delta \cdot e'_j \leq \#(\delta \cap e'_j) -2.$$
It follows that the inequality in \eqref{eq:complexity_proof} is strict, which completes the proof of \eqref{eq:strict-contraction}.
\end{proof}

Let $\mathcal{A}(f)$ 
be the set of isotopy classes of simple closed curves that intersect each edge of $\sG$ at most once, that is, 
$$\mathcal{A}(f)=\{[\gamma]: \gamma\in \mathscr{C}(f) \text{ with  $\gamma \cdot e_j \leq 1$ for all $e_j\in E$}\}.$$ 
Note that $\mathcal{A}(f)$ is finite because $\sG$ is connected (the order in which $\gamma$ crosses the edges of $\sG$ uniquely determines the isotopy class of $\gamma$).

Theorem \ref{thm:finite-curve-attractor} is an immediate consequence of the following statement.

\begin{prop}\label{prop:attractor}
For every curve $\gamma\in\mathscr{C}(f)$, each pullback $\delta$ of $\gamma$ under $f^n$ satisfies $[\delta]\in\mathcal{A}(f)$ for all sufficiently large $n$.
\end{prop}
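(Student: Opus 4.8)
The plan is to combine the key contraction inequality from Lemma~\ref{lem:complexity-decreases} with a finiteness argument, using the complexity $\gamma\cdot\sS$ as a monovariant. The crucial observation is that Lemma~\ref{lem:complexity-decreases} gives two statements at once: first, the \emph{total} complexity of the pullbacks never increases (Equation~\eqref{eq:contraction}); and second, it \emph{strictly} decreases whenever the curve $\gamma$ meets some edge of $\sT$ more than once (Equation~\eqref{eq:strict-contraction}). Since complexity takes values in the nonnegative integers, a strictly decreasing sequence cannot continue forever. So I expect to argue that after finitely many pullback steps every surviving pullback must have complexity that has ``stabilized,'' and that a stabilized curve is forced to lie in $\mathcal{A}(f)$.

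The main step I would carry out is an induction on the complexity $c(\gamma):=\gamma\cdot\sS$. Fix $\gamma\in\mathscr{C}(f)$ and consider the full tree of iterated pullbacks: the pullbacks of $\gamma$ under $f^n$ are exactly the pullbacks under $f$ of the pullbacks under $f^{n-1}$, so I can analyze the process one step at a time. If $c(\gamma)\leq 1$ then already $\gamma\cdot e\leq 1$ for every $e\in E_\sS$ (since the individual intersection numbers are nonnegative integers summing to $c(\gamma)$), hence $[\gamma]\in\mathcal{A}(f)$ and there is nothing to prove. For the inductive step, suppose the claim holds for all curves of complexity strictly less than $c(\gamma)$. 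I split into two cases according to whether $\gamma$ already lies in $\mathcal{A}(f)$.

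If $\gamma\cdot e\leq 1$ for every edge $e\in E_\sS$, then $[\gamma]\in\mathcal{A}(f)$ and, because Equation~\eqref{eq:contraction} guarantees each pullback $\delta$ of $\gamma$ satisfies $\delta\cdot\sS\leq\gamma\cdot\sS$, the pullbacks have complexity no larger; I would still need to check that $\mathcal{A}(f)$ is \emph{forward invariant} under pullback, i.e.\ that pullbacks of curves in $\mathcal{A}(f)$ remain in $\mathcal{A}(f)$. The cleaner route, which avoids proving invariance separately, is to handle the other case and reduce to it: if instead $\gamma\cdot e>1$ for some edge $e$, then Equation~\eqref{eq:strict-contraction} applies and gives the strict inequality
\[
\sum_{\delta\in\Delta(\gamma)}\delta\cdot\sS<\gamma\cdot\sS.
\]
In particular every pullback $\delta\in\Delta(\gamma)$ has $\delta\cdot\sS<\gamma\cdot\sS=c(\gamma)$, so the inductive hypothesis applies to each $\delta$: there is an $N_\delta$ such that all pullbacks of $\delta$ under $f^{m}$ lie in $\mathcal{A}(f)$ for $m\geq N_\delta$. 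Taking $N:=1+\max_{\delta\in\Delta(\gamma)}N_\delta$ (a finite maximum, since $\Delta(\gamma)$ is finite) and using that every pullback of $\gamma$ under $f^{n}$ is a pullback under $f^{n-1}$ of some $\delta\in\Delta(\gamma)$, I conclude that all pullbacks of $\gamma$ under $f^{n}$ lie in $\mathcal{A}(f)$ for $n\geq N$.

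The step I expect to be the genuine obstacle, and the one I would want to nail down carefully, is the forward invariance of $\mathcal{A}(f)$ in the first case above — that is, showing a curve with all intersection numbers at most $1$ pulls back to curves with the same property. The strict-contraction lemma says nothing in this regime, so one cannot rely on complexity dropping; one must instead argue directly that a single transverse crossing of each edge cannot ``multiply'' under the blow-up preimage, using the homeomorphism property of $f$ on each patch $D_j$ and on each complementary region $U'_k$ (Conditions \ref{B7}--\ref{B8}) together with the isotopy $e'_j\simeq e_j$ rel $C_f$ from \ref{B4}. Alternatively, the induction can be arranged to sidestep this entirely by treating $\mathcal{A}(f)$ as the \emph{base case} (complexity $\leq 1$) and invoking Equation~\eqref{eq:contraction} to ensure that once a curve is in $\mathcal{A}(f)$ its complexity stays bounded, then combining with a pigeonhole/stabilization argument on the integer-valued complexity sequence along any infinite branch of the pullback tree. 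I would present the clean induction on complexity as the main argument and relegate the invariance check to a short verification using Conditions \ref{B4} and \ref{B6}--\ref{B8}.
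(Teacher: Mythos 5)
Your proposal is correct and follows essentially the paper's route: the paper proves this proposition precisely by combining Lemma~\ref{lem:complexity-decreases} with an induction on the complexity $\gamma\cdot\sS$, which is exactly the argument you carry out. The forward-invariance step you single out as the genuine obstacle is in fact immediate from the edge-wise inequality $\sum_{\delta\in\Delta(\gamma)}\delta\cdot e_j\leq \gamma\cdot e_j$ that the proof of Lemma~\ref{lem:complexity-decreases} establishes before summing over $j$ (the chain \eqref{eq:complexity_proof}: minimal position gives $\#(\gamma\cap e_j)=\gamma\cdot e_j$, the homeomorphism $f|e'_j:e'_j\to e_j$ gives $\sum_{\delta\in\Delta(\gamma)}\#(\delta\cap e'_j)=\gamma\cdot e_j$, and the isotopy between $e'_j$ and $e_j$ rel $C_f$ gives $\delta\cdot e_j\leq\#(\delta\cap e'_j)$), so a curve meeting each edge of $\sS$ at most once indeed pulls back to curves with the same property.
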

\begin{proof}
Note that if $[\gamma]\in \mathcal{A}(f)$, then $[\delta]\in\mathcal{A}(f)$ for every pullback $\delta$ of $\gamma$.  Indeed,  arguing as in the proof of Lemma \ref{lem:complexity-decreases}, we have:
$$\delta \cdot e_j = \delta \cdot e_j'\leq  \#(\delta \cap e'_j) \leq \#(f^{-1}(\gamma) \cap e'_j) = \#(\gamma\cap e_j) = \gamma\cdot e_j \leq 1$$
for all $e_j\in E$.  For an arbitrary $\gamma\in\mathscr{C}(f)$,  the statement now follows from Lemma \ref{lem:complexity-decreases} by induction on the complexity $\gamma\cdot \sG$.
\end{proof}

\begin{remark} In fact, a slightly stronger statement is true.  It is straightforward to check that for each curve $\gamma$ with ${[\gamma]\in\mathcal{A}(f)}$ there exists a pullback $\delta$ of $\gamma$ under $f$ with $[\delta]=[\gamma]$; see \cite[Lemma 5.5]{HluPro} for an argument. Consequently,  $\mathcal{A}(f)$ is the global curve attractor of $f$.
\end{remark}

\section{Acknowledgments}
The author gratefully acknowledges the support of the Studienstiftung des Deutschen
Volkes.  He was also partially supported by an AMS-Simons Travel Grant.  The author would like to express sincere gratitude to his PhD advisers,
Dierk Schleicher and Daniel Meyer, for their guidance and encouragement during
his graduate studies.  He is grateful to Mario Bonk, Dzmitry Dudko, Russell Lodge, Kevin Pilgrim,  Nikolai Prochorov, and Palina Salanevich for multiple insightful discussions,  and especially thankful to Kevin Pilgrim for introducing him to the problem of connectivity of Tischler graphs.  The author also thanks Indiana University Bloomington and the University of California, Los Angeles for their hospitality during research visits,  where a part of this work was carried out.

\bibliographystyle{alpha}
\bibliography{main}

\end{document}